\theoremstyle{definition}
\newtheorem{theorem}{Theorem}[section]
\newtheorem{proposition}[theorem]{Proposition}
\newtheorem{lemma}[theorem]{Lemma}
\newtheorem{definition}[theorem]{Definition}
\newtheorem{question}[theorem]{Question}
\newtheorem{corollary}[theorem]{Corollary}
\newtheorem{remark}[theorem]{Remark}
\newcommand{\id}{\text{id}}
\def\co{\colon\thinspace}
\newtheorem*{rep@theorem}{\rep@title}
\newcommand{\newreptheorem}[2]{%
\newenvironment{rep#1}[1]{%
 \def\rep@title{#2 \ref{##1}}%
 \begin{rep@theorem}}%
 {\end{rep@theorem}}}
\begin{document}

\rhead{\thepage}
\lhead{\author}
\thispagestyle{empty}


\raggedbottom
\pagenumbering{arabic}
\setcounter{section}{0}


\title{Surgery on tori in the 4--sphere}
\author{Kyle Larson}

\begin{abstract}
We investigate the operation of torus surgery on tori embedded in $S^4$. Key questions include which 4--manifolds can be obtained in this way, and the uniqueness of such descriptions. As an application we construct embeddings of 3--manifolds into 4--manifolds by viewing Dehn surgery as a cross section of a surgery on a surface. In particular, we give new embeddings of homology spheres into $S^4$.
\end{abstract}
\maketitle

\begin{section}{introduction}

Given an embedded torus $\mathcal{T}$ with trivial normal bundle in a 4--manifold $X$, torus surgery on $\mathcal{T}$ (also called a \emph{logarithmic transform}) is the process of removing a neighborhood $\nu \mathcal{T}$ and re-gluing $T^2 \times D^2$ by some diffeomorphism $\phi$ of the boundary to form $X_\mathcal{T} = X \setminus \nu \mathcal{T} \cup_\phi T^2 \times D^2$. Torus surgery is the operation underlying almost all examples of exotic 4--manifolds (see \cite{FS} for a nice overview). 
While torus surgery is a well-studied operation, most of the work has focused on tori embedded in elliptic surfaces (or at least in neighborhoods that admit a special elliptic fibration). Here we restrict to the case where the tori are embedded in $S^4$.

There are two natural 4-dimensional analogues to Dehn surgery on knots in $S^3$. The first is the Gluck twist operation \cite{gluck} on 2--knots, and the other is torus surgery (it is known that surgery on higher genus surfaces is a trivial operation since the gluing map will always extend over the tubular neighborhood of the surface). Now the possible 4--manifolds obtained by a Gluck twist in $S^4$ are quite limited; by an application of Freedman's theorem \cite{F} the result will always be homeomorphic to $S^4$. Furthermore, by a theorem of Iwase \cite{Iw1}, the result of a Gluck twist can also be obtained by a certain related torus surgery, and so for these reasons torus surgery seems to be the appropriate 4-dimensional generalization of Dehn surgery.

However, there is an immediate impediment to proving an analogue of the powerful Lickorish-Wallace theorem for Dehn surgery (which states that every closed, connected, oriented 3--manifold can be obtained by Dehn surgery on a link in $S^3$): torus surgery always preserves the Euler characteristic and signature of the 4--manifold. Therefore, the relevant question is:

\begin{question}\label{decomp}
Which 4--manifolds with Euler characteristic 2 and signature 0 can be obtained by surgery on a link of tori in $S^4$?
\end{question}

As a preliminary result in this direction we can show that a large class of groups can be obtained as fundamental groups of such 4--manifolds: 
we prove that any finitely presented group with non-negative deficiency appears as the fundamental group of a 4--manifold obtained by surgery on a link of tori in $S^4$. While we will see that it is also possible to obtain examples of groups with arbitrarily large negative deficiency, it is known that not all groups can be obtained in this way. Now at present a full answer to Question \ref{decomp} remains out of reach. However, a theorem by Baykur and Sunukjian \cite{BS} is relevant here. A consequence of their theorem is that any 4--manifold with Euler characteristic 2 and signature 0 can be obtained by a \emph{sequence} of torus surgeries starting in $S^4$ (in particular it may be necessary to have intermediate 4--manifolds). Question \ref{decomp} asks when is it possible to replace such a sequence with a single \emph{simultaneous} set of torus surgeries.

We will consider various spinning constructions to provide nice examples. We will produce an infinite family of distinct tori that admit non-trivial surgeries to $S^4$ (i.e. for each member of this family there exists a gluing map that does not extend to a diffeomorphism of $T^2\times D^2$, such that performing surgery with this gluing map results in a 4--manifold diffeomorphic to $S^4$), suggesting the possibility that manifolds obtained by a single torus surgery in $S^4$ never have a unique such description (i.e there always exist distinct tori that admit surgeries to the same manifold). We will also see that cyclic branched covers of spun knots can always be obtained by torus surgery in $S^4$, and that two spun knots are always related by torus surgery in their exteriors.

Much work has been done investigating which 3--manifolds embed in $S^4$ (for a few examples, see \cite{BB}, \cite{Crisp-Hillman}, \cite{Don}, \cite{Gil-Liv}). We construct embeddings of 3--manifolds into $S^4$ by considering cross sections of Gluck twists. The statement of our result is simplest if we start with a \emph{ribbon link} $L$ in $S^3$. If $M_L$ is the 3--manifold obtained by surgery on $L$ with all the surgery coefficients belonging to the set $\{1/n\}_{n \in \mathbb{Z}}$, then we have that $M_L$ smoothly embeds in $S^4$. The resulting 3--manifold is an integral homology sphere, and so we see that this theorem allows us to construct embeddings for a large family of integral homology spheres into $S^4$. We also examine surgery on the unknotted torus, and use this to show that the 3--manifolds obtained by $p/q$ Dehn surgery on a knot in $S^3$  always embed in either $S^1 \times S^3 \# S^2 \times S^2$ or $S^1 \times S^3 \# S^2 \widetilde{\times} S^2$. If we puncture the 3--manifold then we can eliminate the $S^1 \times S^3$ connected summand. 

\begin{subsection}{Organization}
In Section \ref{basics} we give definitions and consider the basic algebraic invariants related to torus surgery. A discussion of several spinning constructions and their connection to torus surgery takes place in Section \ref{spinexamples}. Our results regarding fundamental groups appear in Section \ref{round} in the context of interpreting torus surgeries as round cobordisms. Lastly, Section \ref{unknot} contains our results about surgery on the unknotted torus and embeddings of 3--manifolds.
\end{subsection}

\subsection{Acknowledgments}

The author would like to thank his advisor Robert Gompf for helpful comments and conversations.  The author was partially supported by NSF grant DMS-1148490.

\end{section}

\begin{section}{the basics}\label{basics}

We will assume that all manifolds and maps are smooth, and that homology is calculated with integer coefficients unless otherwise noted.

\begin{subsection}{Torus exteriors}

A \emph{surface knot} $K$ is an embedded submanifold in $S^4$ that is diffeomorphic to some closed surface. When $K$ is diffeomorphic to $S^2$ it is called a 2--knot. This paper is concerned with the case that $K$ is diffeomorphic to the torus $T^2$, and we will simply say that $K$ is a torus in $S^4$ (henceforth we switch to the notation $\mathcal{T}$ for a torus in $S^4$). Let $E_{\mathcal{T}} = \overline{S^4 \setminus \nu \mathcal{T}}$ denote the \emph{exterior} of $\mathcal{T}$. We can compute the homology of $E_\mathcal{T}$ by the long exact sequence of the pair $(S^4, E_\mathcal{T})$, using the isomorphism $H_n(S^4, X_\mathcal{T}) \cong H_n(\nu\mathcal{T}, \partial \nu \mathcal{T})$ from excision. The result is that $H_n(E_\mathcal{T})$ is isomorphic to $\mathbb{Z}$ for $n=0,1$, to $\mathbb{Z} \oplus \mathbb{Z}$ for $n=2$, and to the trivial group otherwise. The calculation shows that $H_1(E_\mathcal{T})$ is generated by the homology class of a meridian of $\mathcal{T}$ and generators of $H_2(E_\mathcal{T})$ are given by the \emph{rim tori} $S^1 \times \{pt\} \times \partial D^2$ and $\{pt\} \times S^1 \times \partial D^2$ in $\partial E_\mathcal{T} = \partial \nu \mathcal{T}$ under the identification $\nu \mathcal{T} = S^1 \times S^1 \times D^2$ (and these tori have algebraic intersection number 0 in $E_\mathcal{T}$).

The fundamental groups of torus exteriors in $S^4$ (and for surface knots in general) have been widely studied. The collection of such groups includes all 2--knot groups, and hence all classical knot groups (for an overview see \cite{CKS}). Among other things, it is known that this collection contains groups of arbitrarily large negative deficiency \cite{Levine}. (The \emph{deficiency} of a finite group presentation is the number of generators minus the number of relations. The deficiency of a group is the maximum deficiency of all presentations for the group.)

\end{subsection}


\begin{subsection}{Torus surgery}\label{definitions}

Let $\mathcal{T}$ be an embedded torus in $S^4$. We want to think of $\mathcal{T}$ as a particular embedding of $S^1 \times S^1$ into $S^4$, so that we have fixed curves $\alpha = S^1 \times \{pt\}$ and $\beta = \{pt\} \times S^1$ in $\mathcal{T} \subset S^4$ (whose homology classes provide a preferred basis for $H_1(\mathcal{T})$). Note that it is possible for there to be infinitely many distinct isotopy classes of embeddings $S^1 \times S^1 \hookrightarrow S^4$ with the \emph{same} submanifold as their image \cite{Hirose}. A framing for $\mathcal{T}$ is a particular identification of a tubular neighborhood $\nu\mathcal{T}$ with $T^2 \times D^2$. Given our fixed embedding, there is a canonical framing for $\mathcal{T}$, specified by requiring that the pushoffs $\alpha \times \{pt\}$ and $\beta \times \{pt\}$ in $T^2 \times \partial D^2$ are nullhomologous in the exterior $E_{\mathcal{T}}$ of $\mathcal{T}$. (Framings are identified with $H^1(T^2) \cong \mathbb{Z} \oplus \mathbb{Z}$, and since the first homology of the exterior is generated by a meridian we can twist the $D^2$ factor along $\alpha$ and $\beta$ so that the pushoffs are nullhomologous.) 

We now define the operation of interest in this paper. \emph{Torus surgery on $\mathcal{T}$} is the process of removing $\nu \mathcal{T}$ from $S^4$ and re-gluing $T^2 \times D^2$ by a diffeomorphism $\phi \co T^2 \times \partial D^2 \rightarrow T^2 \times \partial D^2$, using our canonical framing to identify $\partial \nu \mathcal{T}$ with $T^2 \times \partial D^2$. We will momentarily denote the resulting closed 4--manifold by $S^4_\mathcal{T}(\phi)$. Since $T^2 \times D^2$ admits a handle decomposition relative to its boundary with one 2-handle, two 3-handles, and a 4-handle, we can construct $S^4_\mathcal{T}(\phi)$ from $E_\mathcal{T}$ by adding one 2-handle, two 3-handles, and a 4-handle. There is a unique way to attach 3- and 4-handles for a closed 4-manifold (\cite{LP}, \cite{Mont}), and so $S^4_\mathcal{T}(\phi)$ is determined up to diffeomorphism by the attaching circle of the 2-handle (the framing must be the product framing). The attaching circle will be the image of the meridian $\{pt\} \times \partial D^2$ under $\phi$, and this is determined up to isotopy by its homology class 
$[\phi(\{pt\} \times \partial D^2)] = p[m] + a[\alpha] + b[\beta]$, where $m$ is the meridian of $\mathcal{T}$. Therefore, given our fixed embedding of $\mathcal{T}$ and the resulting canonical framing, $S^4_\mathcal{T}(\phi)$ is determined up to diffeomorphism by the integers $p$, $a$, and $b$.
Hence we will denote a torus surgery on $\mathcal{T}$ by $S^4_\mathcal{T}(p,a,b)$. It turns out that the integer $p$ is particularly important, and it is called the \emph{multiplicity} of the surgery. If we think of $T^2 \times \partial D^2$ as $\mathbb{R}^3 / \mathbb{Z}^3$, then we can represent our gluing map $\phi$ by a matrix in $GL(3, \mathbb{Z})$. Since the resulting diffeomorphism type only depends on the image of $\{pt\} \times \partial D^2$, we can choose the gluing map to be any integral matrix (with determinant $\pm 1$) of the form:
\[\phi = \left( \begin{array}{ccc}
* & * & a \\
* & * & b \\
* & * & p \end{array} \right)\]

Now there is another common notation to specify a particular torus surgery. The homology class $a[\alpha] + b[\beta] \in H_1(T^2)$ equals $q\gamma$ for some primitive element $\gamma \in H_1(T^2)$.  We call $q$ the \emph{auxiliary multiplicity} and $\gamma$ the \emph{direction} of the surgery. Then specifying the multiplicity, auxiliary multiplicity, and direction determines the resulting diffeomorphism type of the surgery. If $q=1$ we will say the surgery is \emph{integral}.

The trivial surgery is $S^4_\mathcal{T}(1,0,0)$, which returns $(S^4, \mathcal{T})$. Note that if we choose a different embedding of $\mathcal{T}$ (thought of as a submanifold), we will get the same set of possible torus surgeries but the surgery data could be different. We will also consider surgery on links of tori (a collection of multiple disjoint embeddings of tori into $S^4$), and the resulting manifold will be determined by the surgery data for each individual torus surgery.

\end{subsection}

\begin{subsection}{Algebraic topology}

Next we examine the basic algebraic topology of $S^4_\mathcal{T}(p,a,b)$. Since we have already computed the homology of $E_\mathcal{T}$, we can compute the homology of $S^4_\mathcal{T}(p,a,b)$ using the long exact sequence of the pair $(S^4_\mathcal{T}(p,a,b), E_\mathcal{T})$. For this calculation it is useful to change our identification $\nu \mathcal{T} = T^2 \times D^2$ by a self-diffeomorphism of $T^2 \times D^2$ that is the identity on the second factor but on the $T^2$ factor sends the direction $\gamma$ to $[\{pt\} \times S^1]$. Then we can choose our gluing map $\phi \co T^2 \times \partial D^2 \rightarrow T^2 \times \partial D^2$ to be:
\[\phi = \left( \begin{array}{ccc}
1 & 0 & 0 \\
0 & c & q \\
0 & d & p \end{array} \right)\] 
for some $c$ and $d$ satisfying $cp-dq = 1$. Following the calculation we see that for $p \neq 0$, $H_n(S^4_\mathcal{T}(p,a,b))$ is isomorphic to $\mathbb{Z}_p$ for $n=1,2$, and vanishes for $n=3$. Furthermore, $H_1(S^4_\mathcal{T}(p,a,b))$ is generated by the original meridian $m$ in $E_\mathcal{T}$ and $H_2(S^4_\mathcal{T}(p,a,b))$ is generated by the glued-in torus $T^2 \times \{0\}$. In particular, we observe that multiplicity 1 surgery produces a homology 4--sphere.

Similar computations show multiplicity 0 surgery results in a 4--manifold with the homology of $S^1 \times S^3 \# S^2 \times S^2$.

There is a simple relationship between the fundamental group of $S^4_\mathcal{T}(p,a,b)$ and the fundamental group of the torus exterior $E_\mathcal{T}$.
We start with a presentation of $\pi_1(E_\mathcal{T})$ and add a single relation corresponding to the attaching circle of the 2-handle.

\end{subsection}


\begin{subsection}{Spin structures}


Recall that a 4--manifold $X$ is spin if and only if its second Stiefel-Whitney class $w_2(X) \in H^2(X; \mathbb{Z}_2)$ vanishes. If $X$ is spin, the set of distinct spin structures can be identified with $H^1(X; \mathbb{Z}_2)$. For \emph{odd} multiplicity $p$, we can calculate from the integral homology of $S^4_\mathcal{T}(p,a,b)$ that $ H^2(S^4_\mathcal{T}(p,a,b); \mathbb{Z}_2) \cong  H^1(S^4_\mathcal{T}(p,a,b); \mathbb{Z}_2) \cong 0$. Hence $S^4_\mathcal{T}(p,a,b)$ has a unique spin structure for odd $p$, regardless of the particular choice for $a$ or $b$. 

For even $p$ the situation is more subtle. Here we follow Iwase \cite{Iw1}. Suppose we take a curve on $\mathcal{T}$ and push off using our canonical framing to obtain a curve $c$ in $\partial E_\mathcal{T}$ such that $[c] = 0$ in $H_1(E_\mathcal{T}; \mathbb{Z}_2)$. Let $c'$ be a pushoff of $c$ in $\partial E_\mathcal{T}$ using the product framing of the boundary. Now let $D$ and $D'$ be 2--chains in $E_\mathcal{T}$ such that $[\partial D] = [c]$ and $[\partial D'] = [c']$ (mod 2), and $D$ and $D'$ intersect transversely. Then $q([c]) = D \cdot D'$ (mod 2) is a well-defined function. In fact, $q$ is the Rokhlin quadratic form \cite{R} for $\mathcal{T}$ and so it satisfies $q([c_1] + [c_2]) = q([c_1]) + q([c_2]) + [c_1] \cdot [c_2]$ (mod 2). Furthermore, if the kernel of the inclusion map $H_1(\partial E_\mathcal{T}; \mathbb{Z}_2) \rightarrow H_1(E_\mathcal{T}; \mathbb{Z}_2)$ is $\{0, e_1, e_2, e_3\}$, then Iwase shows that $q(e_i) = 1$ for exactly one $e_i$ (in other words the Arf invariant is 0 for tori in $S^4$; see also \cite{R}). This motivates the following definition.

\begin{definition}
A particular embedding $S^1 \times S^1 \hookrightarrow S^4$ will be called a \emph{spin embedding} if $q([\{pt\} \times S^1]) = q([S^1 \times \{pt\}]) = 0$. We will say the resulting torus $\mathcal{T}$ in $S^4$ is \emph{spin embedded}.
\end{definition}

Note that we can always change our embedding of a torus so that it is spin embedded. Now we can determine when the result of an even multiplicity surgery is spin. Iwase \cite{Iw1} worked this out for a special class of tori obtained by spinning torus knots in $S^3$, and in fact his proof works in this more general context.

\begin{proposition}[\cite{Iw1}]\label{spinprop}
$S^4_\mathcal{T}(p,a,b)$ is spin if $p$ is odd. If $p$ is even, assume that $\mathcal{T}$ is spin embedded. Then $S^4_\mathcal{T}(p,a,b)$ is spin if and only if $ab = 0$ (mod 2).
\end{proposition}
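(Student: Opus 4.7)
My plan is to split on the parity of $p$ and in each case use the handle decomposition $S^4_\mathcal{T}(p,a,b) = E_\mathcal{T} \cup_\phi (T^2 \times D^2)$ from Subsection \ref{definitions} to reduce spinness to a condition on the single 2-handle.

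For odd $p$, the homology calculation preceding this subsection gives $H_1, H_2 \cong \Z_p$, so the universal coefficient theorem yields $H^2(S^4_\mathcal{T}(p,a,b); \Z_2) = 0$. Hence $w_2 = 0$ and the manifold is (uniquely) spin, regardless of $a$ and $b$.

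For even $p$, the exterior $E_\mathcal{T}$ inherits a spin structure from $S^4$, and attaching the two 3-handles and the 4-handle is canonical for a closed 4-manifold and does not obstruct spinness. Thus $S^4_\mathcal{T}(p,a,b)$ is spin iff some spin structure on $E_\mathcal{T}$ extends across the 2-handle. This 2-handle is attached along a framed curve $K \subset \partial E_\mathcal{T}$ with $[K] = p[m] + a[\alpha] + b[\beta]$, framed by the product framing of $\partial\nu\mathcal{T}$ — precisely the pushoff framing used in the definition of $q$. Following Iwase's obstruction-theoretic argument \cite{Iw1}, the 2-handle extends a spin structure exactly when $[K]$ is null-homologous mod 2 in $E_\mathcal{T}$ and $q([K]) = 0$. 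For $p$ even, $[K] \equiv a[\alpha] + b[\beta] \pmod 2$ lies in the kernel of $H_1(\partial E_\mathcal{T}; \Z_2) \to H_1(E_\mathcal{T}; \Z_2)$, so $q$ is defined on it. Applying the quadratic refinement property, the spin-embedded hypothesis $q([\alpha]) = q([\beta]) = 0$, and $[\alpha]\cdot[\beta] = 1$ on $\mathcal{T}$, we compute
\[
q([K]) = q(a[\alpha] + b[\beta]) = a\,q([\alpha]) + b\,q([\beta]) + ab\,[\alpha]\cdot[\beta] = ab \pmod 2,
\]
which vanishes exactly when $ab$ is even.

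The principal obstacle is verifying that the obstruction really reduces to $q([K])$, independent of which of the two spin structures on $E_\mathcal{T}$ we choose. The two spin structures on $E_\mathcal{T}$ differ by the nontrivial element of $H^1(E_\mathcal{T}; \Z_2) = \Z_2$ (dual to the meridian); their restrictions to $K$ differ by the pullback of this class along $K \hookrightarrow E_\mathcal{T}$, which vanishes precisely when $[K] \equiv 0$ in $H_1(E_\mathcal{T}; \Z_2)$. For $p$ even this is exactly our situation, so both spin structures on $E_\mathcal{T}$ induce the same framing on $K$ and there is a single obstruction to analyze. With that identification in place, Iwase's computation for tori obtained by spinning torus knots carries over verbatim.
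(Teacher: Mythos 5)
Your odd-$p$ argument is identical to the paper's (vanishing of $H^2$ with $\Z_2$ coefficients). For even $p$, you take a genuinely different route. The paper runs the Mayer--Vietoris sequence with $\Z_2$ coefficients for the decomposition $S^4_\mathcal{T}(p,a,b) = E_\mathcal{T}\cup_\phi (T^2\times D^2)$, identifies $H_2(S^4_\mathcal{T}(p,a,b);\Z_2)$ as $\mathrm{im}\,\Psi \oplus \langle [D_m + D_\sigma]\rangle$ where $D_m$ is the meridinal disk and $D_\sigma$ is a $2$--chain in $E_\mathcal{T}$ bounded by the surgery curve $\sigma$, computes $[D_m+D_\sigma]^2 = q([\sigma])$, and then invokes the Wu formula to conclude $w_2=0$ iff $q([\sigma]) = 0$. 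You instead argue at the level of handles: you observe that the $3$-- and $4$--handles are attached canonically and present no obstruction, so spinness is governed entirely by whether a spin structure on $E_\mathcal{T}$ extends across the $2$--handle attached along $\sigma$ with the product framing, and you take as the extension criterion $q([\sigma])=0$. Both routes then reduce to the same computation $q([\sigma]) = a^2 q(\alpha) + b^2 q(\beta) + ab \cdot ([\alpha]\cdot[\beta]) = ab \pmod 2$ under the spin-embedded hypothesis, so the conclusion matches. What your approach buys is a cleaner conceptual picture of where the obstruction lives (a single framed circle), and your final paragraph correctly resolves the ambiguity between the two spin structures on $E_\mathcal{T}$ when $p$ is even by noting $[\sigma]\equiv 0$ in $H_1(E_\mathcal{T};\Z_2)$. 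What it costs is that the crucial link between the Rokhlin form $q$ and the extension of spin structures across a product-framed $2$--handle is delegated to ``Iwase's obstruction-theoretic argument'' without detail, whereas the paper grounds this step in the Wu formula, which is more self-contained. Your proposal is correct, but to be airtight you would want to supply the short argument that $q([\sigma])$ is precisely the obstruction class in $H^2(D^2\times D^2, \partial_-;\Z_2)\cong\Z_2$ to extending the restricted spin structure.
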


\begin{proof}
We saw above that $S^4_\mathcal{T}(p,a,b)$ is spin if $p$ is odd, so assume $p$ is even. The Mayer-Vietoris sequence with $\mathbb{Z}_2$ coefficients (we use these coefficients for the rest of the argument) gives us:
\begin{equation}
H_2(T^2 \times D^2) \oplus H_2(E_\mathcal{T}) \xrightarrow{\Psi} H_2(S^4_\mathcal{T}(p,a,b)) \xrightarrow{\partial} H_1(\partial(T^2 \times D^2)) \xrightarrow{\Phi} H_1(T^2 \times D^2) \oplus H_1(E_\mathcal{T}) \nonumber
\end{equation}
Now the kernel of $\Phi = \{0, [m] \}$ for a meridian $m$ of $\mathcal{T}$. Hence we have an induced split exact sequence and isomorphism $H_2(S^4_\mathcal{T}(p,a,b)) =  im \Psi \oplus \langle [D_m + D_\sigma] \rangle$ for $D_m$ the class of the meridinal disk in $T^2 \times D^2$ and $D_\sigma$ a 2--chain in $E_\mathcal{T}$ bounded by the surgery curve $\sigma  = \phi (\{pt\} \times \partial D^2)$ (whose homology class $p[\mu] + a[\alpha] + b[\beta]$ is 0 since $p$ is even). Now the $\mathbb{Z}_2$ intersection form 
is trivial on $ im \Psi$ and on $\langle [D_m + D_\sigma] \rangle$ we have $[D_m + D_\sigma]^2 = q([\sigma]) = a^2q(\alpha) + b^2q(\beta) + ab$ (mod 2). If $\mathcal{T}$ is spin embedded then this equals $ab$ (mod 2) and so by the Wu formula we get that $w_2(S^4_\mathcal{T}(p,a,b)) = 0$ (and hence $S^4_\mathcal{T}(p,a,b)$ is spin) if and only if $a b = 0$ (mod 2).

\end{proof}

\end{subsection}

\begin{subsection}{Special neighborhoods}

The fishtail neighborhood $F$ and cusp neighborhood $C$ are compact 4-manifolds that admit elliptic fibrations over the disk with a single fishtail or cusp singular fiber, respectively. We can describe handle decompositions for these manifolds as follows (see Figure \ref{cuspandfishtail}). We start with a handle decomposition for $T^2 \times D^2$, and to form $F$ we add another 2-handle attached along a pushoff of an $S^1$ factor of $T^2 \times \{0\}$, where the framing of the  2-handle will be obtained from the product framing of the boundary by adding a single left-handed twist. To form $C$ we add one more 2-handle along a pushoff of the other $S^1$ factor of $T^2 \times \{0\}$, where again the framing will be given by taking the product framing and adding a single left-handed twist. The attaching circles for these extra 2--handles are called \emph{vanishing cycles}.


\begin{figure}
\includegraphics[scale=0.25]{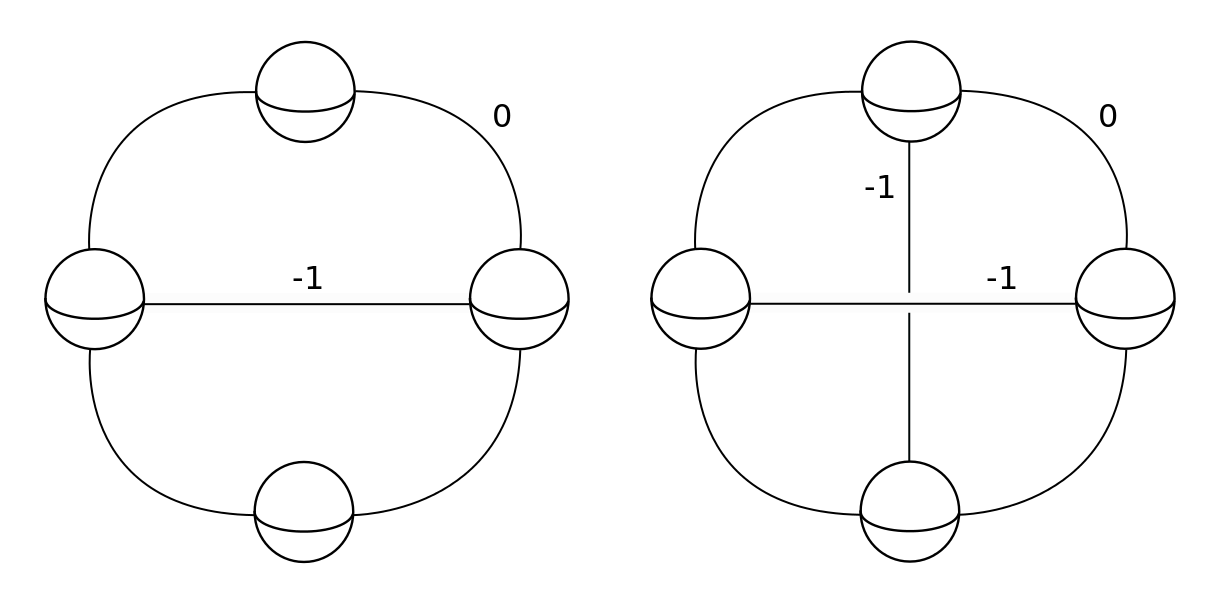}
\centering
 \caption{Here are Kirby diagrams for the fishtail neighborhood $F$ (left) and the cusp neighborhood $C$ (right).}
\label{cuspandfishtail}
\end{figure}

Performing torus surgery on regular fibers of fishtail and cusp neighborhoods has been an important operation in the theory of 4--manifolds. In this context we have a fixed framing for the torus fiber coming from the fibration map, and so we have well-defined notions of the multiplicity, auxiliary multiplicity, and direction of the surgery as before. Here we state two theorems that demonstrate nice properties satisfied by torus surgeries in these neighborhoods. For proofs we refer the reader to \cite{GS} for the first theorem and to \cite{Gom1} for the second.

\begin{theorem}
The result of performing torus surgery on a regular fiber of a cusp neighborhood $C$ depends only on the multiplicity $p$ of the surgery, up to diffeomorphism relative to the boundary.
\end{theorem}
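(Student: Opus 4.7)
The strategy is to realize any two surgeries $C_\mathcal{T}(p, a, b)$ and $C_\mathcal{T}(p, a', b')$ with the same multiplicity as diffeomorphic rel $\partial C$ via a sequence of handle slides. I would first view $C_\mathcal{T}(p,a,b) = E_\mathcal{T} \cup h_s \cup (\text{two 3-handles}) \cup (\text{4-handle})$, where the surgery 2-handle $h_s$ is attached along $K = \phi(\{pt\}\times \partial D^2) \subset T^3 = \partial \nu \mathcal{T}$ with homology class $p[m] + a[\alpha] + b[\beta]$ and with the product framing from $\phi$. Since 3- and 4-handle attachments to a closed 4-manifold are unique (as used in Section \ref{definitions}), the diffeomorphism type rel $\partial C$ is determined by the framed attaching circle of $h_s$ inside $E_\mathcal{T}$.

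The key geometric input is that the two vanishing cycles $\alpha$ and $\beta$ bound embedded vanishing disks $\Delta_\alpha, \Delta_\beta$ in $E_\mathcal{T}$, coming from the cores of the vanishing cycle 2-handles $h_\alpha, h_\beta$ of $C$. I would use these disks to slide $h_s$ over $h_\alpha$ and $h_\beta$: each such slide replaces $K$ by a band sum with a parallel copy of $\partial h_\alpha$ (respectively $\partial h_\beta$), changing the homology class of $K$ by $\pm[\alpha]$ (respectively $\pm[\beta]$) while preserving the meridional coefficient $p$. By an appropriate sequence of slides, the class $p[m] + a[\alpha] + b[\beta]$ can be converted to $p[m] + a'[\alpha] + b'[\beta]$ for any prescribed $(a', b')$. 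Since simple closed curves in $T^3$ of a given primitive homology class are isotopic, the attaching circle of $h_s$ after these slides matches, up to isotopy in $T^3$, the one prescribed by the $(p, a', b')$-surgery.

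The main obstacle I anticipate is keeping track of the framing of $h_s$ through these handle slides. In $S^3$, slides produce framing changes given by integer linking numbers, but in $T^3$ framings are not encoded by a single integer and ``linking'' with respect to the vanishing cycle 2-handles must be computed using the vanishing disks as relative Seifert surfaces. The fact that $C$ has two independent vanishing disks, unlike a fishtail neighborhood which has only one, provides the extra flexibility needed for slides that simultaneously realize the desired homology-class change and compensate any framing discrepancy, so that the final attached 2-handle really does realize the $(p, a', b')$-surgery with its product framing. Working out that the framing adjustments cancel exactly is the technical heart of the argument, and I would carry it out by direct Kirby-diagram manipulation using the standard diagrams for $C$ shown in Figure \ref{cuspandfishtail}, following \cite{GS}.
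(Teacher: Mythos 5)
The paper does not prove this theorem; it defers to \cite{GS} with the remark ``For proofs we refer the reader to \cite{GS} for the first theorem\ldots''. So there is nothing in the paper itself to compare against. That said, your strategy of sliding the surgery $2$-handle over the two vanishing-cycle $2$-handles is the standard Kirby-calculus route and is in the spirit of the cited proof. Your homology bookkeeping is sound: a pushoff of $\alpha$ with any framing is still homologous to $[\alpha]$ in $T^3$ (the small meridinal loop of $\alpha$ is null-homologous in $T^3 \setminus \alpha$), so each slide shifts the attaching class by $\pm[\alpha]$ or $\pm[\beta]$ without changing the coefficient of $[m]$, and any $(a',b')$ is reachable.

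The genuine gap is exactly the one you flag and then set aside: the framing. This is not peripheral. For the slid diagram to reassemble into a $(p,a',b')$-log transform, the framing of $h_s$ after the slides must be the product framing induced by some gluing $T^2\times\partial D^2\to T^2\times\partial D^2$. A slide over the $(-1)$-framed vanishing-cycle handle changes the framing of $h_s$ by a quantity that must be computed in $T^3$ (there is no absolute linking number available), and you must then show the result is still a product framing for some gluing $\phi'$ --- otherwise you have shown only that some $4$-manifold bounds the slid diagram, not that it is $C_{\mathcal T}(p,a',b')$. Until that computation is carried out (or replaced by an argument that, e.g., the framing is automatic because the slide is a diffeomorphism preserving the decomposition $E\cup(T^2\times D^2)$), the proof is incomplete. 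Two smaller points worth cleaning up: the Laudenbach--Po\'enaru/Montesinos uniqueness you invoke is stated for closing off $\#_k S^1\times S^2$ to a \emph{closed} $4$-manifold, whereas here you are filling in one boundary component of a manifold that still has $\partial C$ as a second boundary component, so you need the relative version; and the notation $E_\mathcal{T}$ is reserved in this paper for exteriors of tori in $S^4$, whereas you mean the exterior of the regular fiber inside $C$.
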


This theorem says that the result of torus surgery in $C$ is independent of the auxiliary multiplicity or direction of the surgery. We observe that $C$ does not admit an embedding into $S^4$. For example, by Proposition \ref{spinprop} we see that whether the result of performing an even multiplicity surgery on a torus in $S^4$ is spin depends on the auxiliary multiplicity and the direction. If such a torus was a regular fiber of a cusp neighborhood there could be no such dependence. However, there do exist embeddings of fishtail neighborhoods into $S^4$, and we will apply the following result in Section \ref{spinexamples}.

\begin{theorem}\label{fishtail}
The result of performing a multiplicity 1 surgery on a fiber of a fishtail neighborhood $F$, with direction given by the vanishing cycle, is diffeomorphic to $F$ relative to the boundary.
\end{theorem}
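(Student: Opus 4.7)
My plan is to work in Kirby calculus using the handle decomposition of $F$ in Figure \ref{cuspandfishtail}. Write $F = (T^2 \times D^2) \cup V$, where $V$ is the vanishing cycle 2-handle attached along a pushoff of $\alpha \subset T^2 \times \{0\}$ into $\partial(T^2 \times D^2) = T^3$ with framing $-1$ relative to the product framing. The regular fiber $\mathcal{T}$ sits in the $T^2 \times D^2$ block as a parallel copy of $T^2 \times \{0\}$, with tubular neighborhood $\nu\mathcal{T}$.

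Viewed from its boundary $T^3$, the neighborhood $\nu\mathcal{T} = T^2 \times D^2$ is built by attaching a 2-handle along the meridian $m = \{pt\} \times \partial D^2$ with framing $0$, together with the (unique) 3- and 4-handles. Decomposing $F = E_\mathcal{T} \cup \nu\mathcal{T}$, the surgery $F_\mathcal{T}(1,q,0)$ is then obtained from the same exterior $E_\mathcal{T}$ by attaching a 2-handle along the surgery curve $\sigma = m + q\alpha$ with framing $0$ (from the product framing of the new $T^2 \times \partial D^2$), followed by the same 3- and 4-handles. The vanishing cycle handle $V$ lies entirely in $E_\mathcal{T}$ and is unaffected by the surgery.

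The core of the argument is a handle slide: slide the surgery 2-handle over $V$ a total of $q$ times. Each slide subtracts a copy of $\alpha$ from the attaching circle, so after $q$ slides the attaching curve is isotopic to the meridian $m$. It then remains to verify that the cumulative framing change is $0$, so that the resulting 2-handle along $m$ has framing $0$ and reproduces the closing meridian handle of $\nu\mathcal{T}$ in $F$. Each slide changes the framing by $-1 + 2\ell_k$, where $\ell_k$ is the linking contribution coming from that slide; the plan is to choose the successive pushoffs of $V$'s attaching circle on distinct parallel copies of $\alpha$ on $\partial \nu\mathcal{T}$ so that the $\ell_k$ contributions exactly offset the $q$ copies of $-1$, yielding net framing $0$.

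The main obstacle is carrying out the framing bookkeeping cleanly. Conceptually, however, the result is natural: multiplicity $1$ keeps the meridian coefficient equal to $1$ throughout the process, so handle slides over $V$ (which only alter the $\alpha$-coefficient) can kill the $q\alpha$ term, while the $(-1)$ framing on $V$ provides precisely the twist needed to absorb the framing change introduced by the shearing map $\phi$. For higher multiplicities $p$, no sequence of slides over $V$ could restore the correct meridian coefficient, which is consistent with the theorem being stated only for multiplicity $1$.
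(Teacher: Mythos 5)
The paper does not actually prove Theorem \ref{fishtail}; it explicitly refers the reader to \cite{Gom1} for the proof, so there is no in-paper argument to compare against. That said, your Kirby-calculus sketch is in the right spirit — handle slides over the vanishing cycle $V$ are indeed how one proves this in, e.g., \cite{GS} — but as written the argument has a genuine gap at exactly the point you flag. The framing change in a handle slide is $n_1 + n_2 \pm 2\,\mathrm{lk}(K_1,K_2)$ computed in an honest Kirby diagram (i.e.\ in $S^3$ after fixing a surgery/dotted-circle presentation of the $T^3$ region); the linking numbers $\ell_k$ are determined by the diagram once the band is chosen, and the band sum must use the pushoff of $V$ given by $V$'s own framing. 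You do not get to ``choose the successive pushoffs of $V$'s attaching circle'' to force the $\ell_k$'s to cancel the $-1$'s; that freedom does not exist. What actually makes the bookkeeping close up is a computation: in the Borromean-rings picture of the fishtail, the surgery 2-handle for $(p,q)=(1,q)$ has a specific framing (not $0$) that is precisely absorbed by $q$ slides over the $-1$-framed $V$. Without that explicit computation the proof is not complete, and your heuristic ``the $(-1)$ framing provides precisely the twist needed'' is a statement of what needs to be shown, not a demonstration of it.

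Two further points worth noting. First, to get the conclusion ``diffeomorphic to $F$ relative to the boundary'' you need to observe that the handle slides and cancellations are supported away from $\partial F$, so the resulting diffeomorphism restricts to the identity there; this is true but should be stated. Second, the conceptually cleanest route — and likely the one in \cite{Gom1} — avoids framing arithmetic entirely: the monodromy of the fishtail fibration around the singular fiber is a Dehn twist along the vanishing cycle, so there is a diffeomorphism of $E_\mathcal{T}$ fixing $\partial F$ pointwise whose restriction to $\partial\nu\mathcal{T}\cong T^3$ is exactly the shear along $\alpha$. Precomposing the regluing map with this diffeomorphism shows $S^4_\mathcal{T}(1,q,0)$ (in the fishtail, for any $q$) yields $F$ rel boundary. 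If you want to stay with Kirby calculus, you must carry out the framing computation explicitly; if you want a short proof, the monodromy argument is the better route.
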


\end{subsection}
\end{section}

\begin{section}{Spinning constructions}\label{spinexamples}

First we introduce a nice family of tori in $S^4$. Let $K$ be a knot in $S^3$. Remove from $S^3$ a 3--ball disjoint from $K$, and consider the resulting pair $(B^3, K)$. Then we get a torus $\mathcal{T}_K$ in $S^4$ by taking $K \times S^1 \subset B^3 \times S^1$ in the decomposition $B^3 \times S^1 \cup_\id S^2 \times D^2$ of the 4--sphere (see Figure \ref{spinning}). Note that we get a different torus, denoted $\mathcal{T}_K'$, if we glue $S^2 \times D^2$ to $B^3 \times S^1$ by the Gluck twist map $\rho \co S^2 \times S^1 \rightarrow S^2 \times S^1$. (Recall $\rho$ is defined by sending $(x, \theta)$ to $(rot_\theta(x), \theta)$, where $rot_\theta$ is rotation of $S^2$ about a fixed axis through angle $\theta$.) We will call $\mathcal{T}_K$ the \emph{spun torus} of $K$ and $\mathcal{T}_K'$ the \emph{twisted spun torus} of $K$ (Boyle calls $\mathcal{T}_K'$ a \emph{turned torus} \cite{Boyle}). We will choose our embedding $S^1 \times S^1 \hookrightarrow \mathcal{T}_K$ (resp. $\mathcal{T}_K'$) so that $\alpha$ is identified with $K \times \{pt\}$ and $\beta$ is identified with $\{pt\} \times S^1$ in $K \times S^1$.

For a nontrivial knot $K \subset S^3$, the exteriors of $\mathcal{T}_K$ and $\mathcal{T}_K'$ will have the same fundamental group (the knot group for $K$), but they are neither isotopic \cite{Liv} nor have diffeomorphic exteriors \cite{Boyle}. We remark that the special case of spinning torus knots in $S^3$ (and surgery on the resulting tori) was extensively studied by Iwase in \cite{Iw1} and \cite{Iw2}.

\begin{proposition}
Given a knot $K \subset S^3$, the twisted spun torus $\mathcal{T}_K'$ lies in a fishtail neighborhood in $S^4$ with the vanishing cycle given by a push off of $\beta$.
\end{proposition}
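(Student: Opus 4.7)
The plan is to construct a fishtail neighborhood containing $\mathcal{T}_K'$ by combining a tubular neighborhood $\nu \mathcal{T}_K'\subset B^3\times S^1$ with a disk chosen in the $S^2\times D^2$ side of the decomposition $S^4=B^3\times S^1\cup_\rho S^2\times D^2$. The essential point is that the Gluck twist $\rho$ will twist the natural framing of this disk by exactly one unit, which is precisely what the definition of a fishtail neighborhood demands.

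I would first pick a point $p\in\partial B^3=S^2$ lying on the rotation axis of $\rho$ and set $\beta'=\{p\}\times S^1\subset\partial B^3\times S^1$. After connecting the basepoint on $K$ used to define $\beta$ to $p$ by a path in $B^3\setminus K$, this $\beta'$ becomes a pushoff of $\beta$ lying on $\partial\nu \mathcal{T}_K'$. Because $p$ is fixed by every $rot_\theta$, $\rho$ fixes $\beta'$ pointwise, so $\beta'$ is identified with $\{p\}\times\partial D^2\subset\partial(S^2\times D^2)$ and bounds the disk $D=\{p\}\times D^2\subset S^2\times D^2$.

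The central step will be a framing comparison at $\beta'$. The product framing coming from $\partial\nu\mathcal{T}_K'\cong T^2\times\partial D^2$ is a parallel pushoff inside $\partial B^3\times S^1$, while the framing induced by a tubular neighborhood of $D$, restricted to the boundary, is $\{p_1\}\times\partial D^2$ for a nearby point $p_1\in S^2$. Under the gluing $\rho$ this framing pushoff becomes the curve $\theta\mapsto(rot_\theta(p_1),\theta)$ in $\partial B^3\times S^1$, which winds once around $p$ in $S^2$ as $\theta$ sweeps around $S^1$. Hence the two framings differ by a single twist, and therefore $\nu\mathcal{T}_K'\cup\nu D$ is by definition a fishtail neighborhood with vanishing cycle a pushoff of $\beta$, as required.

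The main obstacle I expect is pinning down the sign of the framing twist (left- versus right-handed). This reduces to an orientation check in the definition of $\rho$; if the convention yields the wrong sign, replacing $\rho$ by $\rho^{-1}$ produces the same manifold $S^4$ and the same torus $\mathcal{T}_K'$ up to orientation, so the fishtail conclusion is unaffected.
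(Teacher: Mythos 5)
Your approach is essentially the same as the paper's: identify a pushoff of $\beta$ that becomes the attaching circle of a $2$--handle coming from the $S^2\times D^2$ side, and observe that the Gluck twist $\rho$ introduces exactly one unit of framing twist relative to the product framing, which is what the definition of a fishtail neighborhood requires. The argument is correct in substance.

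The one place you gloss over a detail is the claim that $\nu\mathcal{T}_K'\cup\nu D$ is a fishtail neighborhood: as you have set things up, $\nu\mathcal{T}_K'\subset B^3\times S^1$ and $\nu D\subset S^2\times D^2$ are not adjacent, so their union is not even a connected submanifold of $S^4$. You need to either (a) thicken the annulus $\gamma\times S^1$ (where $\gamma$ is your path in $B^3\setminus K$) and include it, so that the disk $D\cup(\gamma\times S^1)$ is attached along a curve actually lying on $\partial\nu\mathcal{T}_K'$, and then check that the framing comparison propagates without further twist along this product annulus (it does, but it should be said); or (b) do what the paper does, namely isotope $K$ so that $\partial\nu K$ is tangent to $\partial B^3$ at a point $x_0$, which makes $x_0\times S^1$ lie simultaneously on $\partial\nu\mathcal{T}_K'$ and on $S^2\times S^1$, so the $2$--handle from $S^2\times D^2$ attaches directly to $\nu\mathcal{T}_K'$ and the framing comparison happens at a single location. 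Your handling of the sign of the framing twist is fine; since $\rho$ and $\rho^{-1}$ are isotopic (as $\rho^2\simeq\mathrm{id}$), the sign can be chosen to match the left--handed convention.
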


\begin{proof}
Isotope $K$ in $B^3$ so that a point $x \in K$ lies near the boundary $\partial B^3$. In particular, arrange so that $x \times \partial D^2 \subset \partial \nu K$ is tangent to $\partial B^3$ at a point $x_0 \in \partial B^3$. Now we obtain $\mathcal{T}_K'$ in $S^4$ by crossing $(B^3, K)$ with $S^1$ and gluing in $S^2 \times D^2$ by the map $\rho$. In terms of handles, we glue in $S^2 \times D^2$ by first attaching a 2-handle $h_2$ along $\{pt\}  \times S^1$ on the boundary $S^2 \times S^1$ with framing given by adding a left-handed twist to the product framing (if we glue by the identity map instead of $\rho$ we would take the product framing), and then capping with a 4-handle. We can choose the attaching circle of $h_2$ to be $x_0 \times S^1$, and then it follows directly from the definition that $(\nu K \times S^1) \cup h_2 = \nu \mathcal{T}_K' \cup h_2$ is a fishtail neighborhood. We have attached $h_2$ along a push off of $\beta$, and this attaching circle is the vanishing cycle.
\end{proof}

In contrast to the case of knots admitting non-trivial $S^3$ surgeries (only the unknot admits such a surgery \cite{GL}), we can construct infinitely many tori in $S^4$ that admit non-trivial $S^4$ surgeries (by a non-trivial surgery we mean one such that the gluing map does not extend to a diffeomorphism of $T^2\times D^2$). In particular, as a corollary of the preceding proposition we see that each twisted spun torus $\mathcal{T}_K'$ admits infinitely many non-trivial surgeries to $S^4$.

\begin{corollary}\label{S4surgeries}
$S^4_{\mathcal{T}_K'}(1,0, b)$ is diffeomorphic to $S^4$.
\end{corollary}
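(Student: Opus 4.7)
The plan is to push the entire surgery inside a fishtail neighborhood of $S^4$ and invoke Theorem \ref{fishtail}. The previous proposition does the hard work already: it places $\mathcal{T}_K'$ inside a fishtail neighborhood $F \subset S^4$ as a regular fiber, with vanishing cycle given by a pushoff of $\beta$. Since the torus surgery only alters a tubular neighborhood $\nu \mathcal{T}_K' \subset F$, the surgered manifold splits as
$$S^4_{\mathcal{T}_K'}(1,0,b) \;=\; \bigl(S^4 \setminus \mathrm{int}(F)\bigr) \cup_{\partial F} F^{(b)},$$
where $F^{(b)}$ denotes the result of performing the $(1,0,b)$ surgery on the regular fiber $\mathcal{T}_K'$ of $F$.

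Next, I would read off the surgery parameters in the language of Theorem \ref{fishtail}. The multiplicity is $p=1$, and the surgery curve class $0\cdot[\alpha] + b[\beta] = b[\beta]$ has direction equal to the primitive class $[\beta]$ (with auxiliary multiplicity $|b|$). Since $[\beta]$ is precisely the vanishing cycle of $F$, Theorem \ref{fishtail} applies and yields $F^{(b)} \cong F$ relative to $\partial F$. Regluing along the identity of $\partial F$ reproduces $S^4$, proving the corollary. (The degenerate case $b=0$ is the trivial surgery and requires no argument.)

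The main point that needs a line of justification is the compatibility of framings: the triple $(1,0,b)$ is defined using the canonical framing of $\mathcal{T}_K'$ in $S^4$, whereas Theorem \ref{fishtail} is phrased in terms of the fibration framing of $F$. These agree because, on a nearby regular fiber of $F$, both $\alpha\times\{pt\}$ and $\beta\times\{pt\}$ are already nullhomologous in the exterior $E_{\mathcal{T}_K'}$: the pushoff of $\beta$ bounds the core of the extra 2-handle of $F$ (the vanishing disk from the singular fiber), and the pushoff of $\alpha$ bounds a copy of a Seifert surface for $K$ inside the spun decomposition $B^3 \times S^1 \subset S^4$ used to define $\mathcal{T}_K'$. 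Once this compatibility is in hand, the surgery data transfers verbatim and the application of Theorem \ref{fishtail} is legitimate.
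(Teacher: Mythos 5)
Your proof is correct and follows the same route as the paper's, which simply cites Theorem \ref{fishtail} together with the preceding proposition placing $\mathcal{T}_K'$ as a fiber of a fishtail with vanishing cycle a pushoff of $\beta$. The extra paragraph verifying that the fibration framing coincides with the canonical framing of $\mathcal{T}_K'$ (by exhibiting nullhomologies for the pushoffs of $\alpha$ and $\beta$) fills in a compatibility check the paper leaves implicit, and is accurate as written.
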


\begin{proof}
This follows directly from Theorem \ref{fishtail}.
\end{proof}

The author first observed the existence of these surgeries follows from a more general result appearing in unpublished work by Gompf \cite{Gom2}. Since $S^4$ does not admit a unique surgery description, it is natural to ask how widespread is this phenomenon.

\begin{question}
If $X$ is a 4--manifold obtained by surgery on a torus $\mathcal{T}$ in $S^4$, can $X$ be obtained by surgery on an infinite family of distinct tori $\{\mathcal{T}_i\}$ in $S^4$?
\end{question}

The above examples also suggest another question. In contrast with the classical dimension \cite{GL}, it is known that there exist inequivalent 2--knots with the same complement \cite{Go}. However, by \cite{gluck}  there can be at most two $2$--knots with the same complement. The case for tori in $S^4$ is unknown.

\begin{question}
Do there exist  (perhaps infinitely many) distinct tori in $S^4$ with the same complement?
\end{question}

Torus surgery appears to be the right perspective to answer this question positively. The goal would be to find non-trivial tori that admit surgeries to $S^4$ such that the surgery gluing map does not extend over the exterior or the neighborhood of the torus (this rules out the examples from Corollary \ref{S4surgeries}).

Next we define the spin of a manifold.

\begin{definition}
Let $M$ be a closed $n$--manifold, and let $M^\circ$ denote $M$ with an open ball removed. Then the \emph{spin} of $M$ is the closed $(n+1)$--manifold defined by $spin(M) = \partial (M^\circ \times D^2)$. This is equivalent to taking $M^\circ \times S^1 \cup S^2 \times D^2$.
\end{definition}

The following is an easy observation.

\begin{proposition}\label{spinning manifolds}
Let $M$ be a closed, orientable 3--manifold. Then $spin(M)$ can be obtained by surgery on a link of tori in $S^4$.
\end{proposition}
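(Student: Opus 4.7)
The plan is to bootstrap from the Lickorish--Wallace theorem for $3$--manifolds, producing the torus link in $S^4$ by spinning a Dehn surgery link in $S^3$.

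First, by Lickorish--Wallace, realize $M$ as integral Dehn surgery on some link $L = L_1 \cup \cdots \cup L_n \subset S^3$, with gluing maps $\phi_i \co \partial(S^1 \times D^2) \to \partial \nu L_i$. Choose an open $3$--ball $B \subset S^3$ disjoint from $L$, so that $L \subset S^3 \setminus B = B^3$. Using the standard decomposition $S^4 = B^3 \times S^1 \cup_{\id} S^2 \times D^2$, the link of spun tori $\mathcal{T}_L := L \times S^1$ sits in $B^3 \times S^1 \subset S^4$ as in Section \ref{spinexamples}. The key geometric observation is the identification
\[
S^4 \setminus \nu \mathcal{T}_L \; = \; (B^3 \setminus \nu L) \times S^1 \; \cup \; S^2 \times D^2,
\]
since $\nu \mathcal{T}_L = \nu L \times S^1$ lies entirely in the $B^3 \times S^1$ piece.

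Next, I would unpack the definition of $spin(M)$. Removing a ball from $M$ that lies inside the unaltered part $B \subset S^3$, one gets
\[
M^\circ \; = \; (B^3 \setminus \nu L) \; \cup_{\phi_1, \ldots, \phi_n} \; \bigsqcup_{i=1}^n S^1 \times D^2.
\]
Crossing with $S^1$ commutes with this gluing, and $(S^1 \times D^2) \times S^1 = T^2 \times D^2$, so
\[
M^\circ \times S^1 \; = \; \bigl((B^3 \setminus \nu L) \times S^1\bigr) \; \cup_{\phi_i \times \id} \; \bigsqcup_{i=1}^n T^2 \times D^2.
\]
Appending the $S^2 \times D^2$ cap to form $spin(M) = M^\circ \times S^1 \cup S^2 \times D^2$ and using the identification above gives
\[
spin(M) \; = \; \bigl(S^4 \setminus \nu \mathcal{T}_L\bigr) \; \cup_{\phi_i \times \id} \; \bigsqcup_{i=1}^n T^2 \times D^2.
\]
Each $\phi_i \times \id$ is a diffeomorphism of the boundary $3$--torus, so by the definition in Section \ref{definitions} this exhibits $spin(M)$ as the result of torus surgery on the link $\mathcal{T}_L$ in $S^4$.

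There is no real obstacle here; the content is just matching the two descriptions. The only point worth being careful about is choosing the ball used in the spinning construction to be disjoint from $L$, so that the Dehn surgery solid tori of $M$ cross $S^1$ to become the solid $3$--tori glued in by the torus surgeries, and the spinning cap $S^2 \times D^2$ is unaffected. If desired, one could also record the surgery data: each Dehn surgery slope $p_i/q_i$ produces a multiplicity $p_i$ torus surgery of direction $q_i[\alpha] + r_i[\beta]$ for appropriate $r_i$ (with $\alpha$ the spun meridian--longitude curve and $\beta$ the spinning factor), but the proposition as stated requires only the existence of the description.
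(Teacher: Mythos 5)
Your argument is correct and is essentially identical to the paper's proof: both invoke Lickorish--Wallace, place $L$ in $B^3 = S^3\setminus B$, spin to obtain the torus link $\mathcal{T}_L = L\times S^1 \subset B^3\times S^1 \subset S^4$, and perform torus surgery with gluing maps $\phi_i\times\id$ to turn $B^3\times S^1\cup S^2\times D^2$ into $M^\circ\times S^1\cup S^2\times D^2 = spin(M)$. You have merely spelled out the exterior identification and the commutation of gluing with $\times S^1$ in more detail than the paper does.
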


\begin{proof}
The Lickorish-Wallace theorem states that $M$ can be obtained by Dehn surgery on a link $L$ in $S^3$. Remove a 3--ball away from $L$ in $S^3$, so that we now think of $L$ as sitting in $B^3$. We obtain a link of tori $T_L$ in $S^4$ by taking $L \times S^1 \subset B^3 \times S^1$ inside $spin(S^3) = B^3 \times S^1 \cup S^2 \times D^2 = S^4$. We can now perform surgery on $T_L$ where we take the gluing maps to be $S^1$ times the Dehn surgery gluing maps on $L$ that give $M$. Hence we transform $B^3 \times S^1 \cup S^2 \times D^2$ to $M^\circ \times S^1 \cup S^2 \times D^2 = spin(M)$.
\end{proof}

\begin{figure}
\includegraphics[scale=0.25]{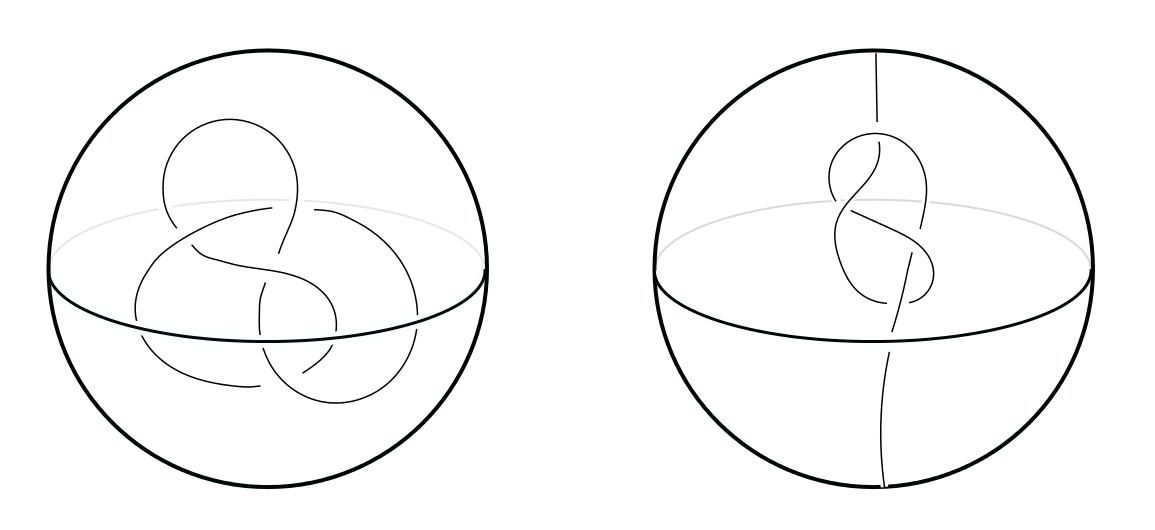}
\centering
 \caption{Spinning the knot $K \subset B^3$ on the left will result in a torus $\mathcal{T}_K$ in $S^4$. Spinning the knotted arc $\hat{K} \subset B^3$ on the right will result in the 2--knot $spin(K)$ in $S^4$.} 
\label{spinning}
\end{figure}

Here we recall the process of spinning knots, a construction due to Artin (see Figure \ref{spinning}).

\begin{definition}
Let $K$ be a knot in $S^3$. If we remove an open ball around a point of $K$, we get a knotted arc $\hat{K} \subset B^3$. The \emph{spin} of $K$ (denoted $spin(K)$) is the 2--knot obtained by taking the annulus $\hat{K} \times S^1 \subset B^3 \times S^1$ and capping off with two disks in $S^2 \times D^2$ inside the decomposition $S^4 =B^3 \times S^1 \cup_\id S^2 \times D^2$.
\end{definition}

This is a well-studied operation that generalizes to higher-dimensional knots. The following is a corollary to Proposition \ref{spinning manifolds}.

\begin{corollary}
Let $K$ be a knot in $S^3$. The $d$-fold cyclic cover of $S^4$ branched over $spin(K)$ can be obtained by surgery on a link of tori in $S^4$.
\end{corollary}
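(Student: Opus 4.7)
The plan is to identify the $d$-fold cyclic branched cover of $(S^4, spin(K))$ with $spin(\Sigma_d(K))$, where $\Sigma_d(K)$ denotes the $d$-fold cyclic cover of $S^3$ branched over $K$. Once this identification is established, the corollary follows immediately from Proposition \ref{spinning manifolds} applied to the closed orientable 3--manifold $\Sigma_d(K)$.

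First I would use the decomposition from the definition of the spin of a knot: write $S^4 = B^3 \times S^1 \cup_\id S^2 \times D^2$ and $spin(K) = \hat{K} \times S^1 \cup (D_1 \sqcup D_2)$, where $D_1, D_2 \subset S^2 \times D^2$ are the two capping disks of the form $\{x_i\} \times D^2$ corresponding to the endpoints $x_1, x_2 \in \partial B^3$ of the arc $\hat{K}$. Then I would build the branched cover piecewise, compatibly over the common boundary $S^2 \times S^1$.

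Over the piece $B^3 \times S^1$, since the branch set $\hat{K} \times S^1$ is a product (and the $S^1$ factor is disjoint from the branching in each slice), the $d$-fold cyclic branched cover is $\hat{\Sigma}_d(K) \times S^1$, where $\hat{\Sigma}_d(K)$ denotes the $d$-fold cover of $B^3$ branched over the knotted arc $\hat{K}$; this is precisely $\Sigma_d(K)$ with an open 3--ball removed, and its boundary is the $d$-fold cyclic cover of $S^2 = \partial B^3$ branched at the two points $\{x_1, x_2\}$, which is again $S^2$. Over the piece $S^2 \times D^2$, the branch set is $\{x_1, x_2\} \times D^2$, so the $d$-fold cyclic branched cover is the $d$-fold cover of $S^2$ branched at two points, crossed with $D^2$; the former is $S^2$, so this piece lifts to $S^2 \times D^2$. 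The boundaries of the two lifted pieces both equal $S^2 \times S^1$ and are glued by the identity, matching the original gluing. Therefore the branched cover is
\[
\hat{\Sigma}_d(K) \times S^1 \cup_\id S^2 \times D^2 = spin(\Sigma_d(K)).
\]

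Finally, since $\Sigma_d(K)$ is a closed orientable 3--manifold, Proposition \ref{spinning manifolds} yields that $spin(\Sigma_d(K))$ is the result of surgery on a link of tori in $S^4$. The main technical point to be careful about is ensuring the $S^1$ coordinate direction and the branch set interact cleanly so that the cover really is a product over $B^3 \times S^1$; this is straightforward from the product structure of $spin(K)$ in this piece, and once it is in place the rest of the argument is purely formal.
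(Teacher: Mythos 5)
Your proposal is correct and takes essentially the same route as the paper: decompose $S^4 = B^3 \times S^1 \cup_\id S^2 \times D^2$, observe that the $d$-fold cyclic branched cover respects this decomposition piece by piece (giving $\hat{\Sigma}_d(K) \times S^1$ over the first piece and $S^2 \times D^2$ over the second), identify the result with $spin(\Sigma_d(K))$, and invoke Proposition \ref{spinning manifolds}. The paper phrases this by exhibiting the branched covering map $spin(M_d(K)) \to S^4$ directly rather than building the cover from below, but this is the same argument in the opposite direction; the ``technical point'' you flag about the $S^1$ direction is handled by noting $\{pt\}\times S^1$ bounds a disk in $S^2 \times D^2$ disjoint from $spin(K)$, so it maps trivially to $\Z_d$.
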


\begin{proof}
Let $M_d(K)$ denote the $d$-fold cyclic cover of $S^3$ branched over $K$. If $\widetilde{K}$ is the lift of $K$ in  $M_d(K)$, let $B$ denote a fibered neighborhood of a point $x \in \widetilde{K}$. Now if $M_d(K)^\circ = \overline{M_d(K) \setminus B}$, then $spin(M_d(K)) = M_d(K)^\circ \times S^1 \cup S^2 \times D^2$. Observe that the $d$-fold branched covering $S^2 \rightarrow S^2$ (with branch points the two poles) times the identity on the $D^2$ factor fits together with the induced branched covering on $M_d(K)^\circ \times S^1$ to form a branched covering on $spin(M_d(K))$ with branch locus $spin(K)$. Hence $spin(M_d(K))$ is the $d$-fold cyclic cover of $S^4$ branched over $spin(K)$, and the result then follows from Proposition \ref{spinning manifolds}.
\end{proof}

Next we prove a generalization of a theorem appearing in \cite{LM}, where the authors considered the case of spinning \emph{fibered} knots and gave a proof relying on interpreting monodromy changes as surgeries.

\begin{theorem}
Let $K_1$ and $K_2$ be two knots in $S^3$. Then $(S^4, spin(K_2))$ can be obtained from $(S^4, spin(K_1))$ by surgery on a link of tori in the complement of $spin(K_1)$.
\end{theorem}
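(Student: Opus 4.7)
My plan is to realize the transformation from $K_1$ to $K_2$ by crossing changes in $S^3$, spin the resulting picture, and interpret the crossing changes as simultaneous torus surgeries in the complement of $spin(K_1)$. The main ingredient is the classical fact that any two knots in $S^3$ are related by a finite sequence of crossing changes (unknot $K_1$ via its unknotting number, then reverse the unknotting of $K_2$), together with the standard fact that each crossing change can be implemented by $\pm 1$ Dehn surgery on a small unknotted circle encircling the two strands of that crossing.

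Applying these facts, I would first produce a link $U = U_1 \cup \cdots \cup U_n$ of unknots in $S^3 \setminus K_1$, with each $U_i$ assigned surgery coefficient $\epsilon_i \in \{\pm 1\}$, such that simultaneous Dehn surgery on $U$ with these coefficients transforms $(S^3, K_1)$ to $(S^3, K_2)$. (One can check that disjointness of the small encircling unknots lets us perform all the surgeries at once.) Next, I would remove an open ball $B$ from $S^3$ disjoint from $K_1 \cup U$ to obtain the pair $(B^3, \hat{K}_1 \cup U)$, where $\hat{K}_1$ is now a properly embedded arc in $B^3$. Spinning produces $\hat{K}_1 \times S^1$ as an annulus in $B^3 \times S^1$ and the link $U \times S^1$ as a link of tori $T_U$ in $B^3 \times S^1$, all disjoint. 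Gluing in $S^2 \times D^2$ via the identity to form $S^4 = B^3 \times S^1 \cup_{\id} S^2 \times D^2$, the annulus caps off to $spin(K_1)$, and $T_U$ sits in the complement of $spin(K_1)$ since $U \cap K_1 = \emptyset$.

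The final step mirrors the proof of Proposition \ref{spinning manifolds}: I would perform torus surgery on $T_U$ with gluing maps given by $S^1$ times the Dehn surgery gluing maps on $U$. The effect on $B^3 \times S^1$ is to replace it with $M^\circ \times S^1$, where $M$ is the Dehn surgery on $S^3$ along $U$. Since each $\epsilon_i = \pm 1$ surgery on an unknotted component preserves the ambient $S^3$, we have $M = S^3$ and so $M^\circ = B^3$, leaving $S^4$ unchanged after regluing $S^2 \times D^2$. However, inside this $B^3 \times S^1$ the annulus $\hat{K}_1 \times S^1$ has been transformed into $\hat{K}_2 \times S^1$, because the surgery simultaneously carried out on each horizontal slice $B^3 \times \{\theta\}$ is precisely the Dehn surgery converting $K_1$ to $K_2$. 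Capping off yields $(S^4, spin(K_2))$, completing the construction.

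The main obstacle I anticipate is essentially bookkeeping rather than substance: namely, making sure that the product torus surgeries really do act as Dehn surgery slicewise, so that the annular cross section $\hat{K}_1 \times S^1$ becomes $\hat{K}_2 \times S^1$ and the two capping disks in $S^2 \times D^2$ are untouched. This requires noting that $T_U$ lies entirely in $B^3 \times S^1$, away from $S^2 \times D^2$, and that the torus surgery gluing map we chose restricts on each $\{\theta\} \in S^1$ to the chosen Dehn surgery map on $U$. Once these identifications are made, the conclusion follows directly from the construction in Proposition \ref{spinning manifolds}.
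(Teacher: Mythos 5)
Your proposal is correct and follows the same route as the paper: obtain $K_2$ from $K_1$ via $\pm 1$ Dehn surgery on a link of crossing-change unknots in the exterior of $K_1$, spin that picture to get a link of tori in the complement of $spin(K_1)$, and perform the corresponding multiplicity $\pm 1$ torus surgeries (Dehn surgery maps crossed with the identity in the $S^1$ direction). The extra verification you flag about slicewise behavior is sound but is treated as immediate in the paper's argument.
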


\begin{proof}
Consider knots $K_1$ and $K_2$ in $S^3$. We can obtain $K_2$ from $K_1$ by surgery on a link $L$ in the exterior of $K_1$, where each component is unknotted and the framing is $\pm1$, since such surgeries allow one to change overcrossings to undercrossings and vice versa. If we remove a small ball around a point of $K_1$ we see $L$ and $\hat{K_1}$ inside $B^3$. Then upon spinning we get $spin(K_1)$ and a link $\mathcal{T}_L$ of tori in $S^4$. By construction, performing multiplicity $\pm1$ surgeries on $\mathcal{T}_L$ (using the Dehn surgery maps times the identity map in the $S^1$ direction) in the exterior of $spin(K_1)$ will return $spin(K_2)$.
\end{proof}

\end{section}

\begin{section}{Round cobordisms}\label{round}

A useful way to study torus surgeries is by round handles (for a full development of this perspective see \cite{BS}). Recall that an $n$-dimensional round $k$-handle is a copy of $S^1 \times D^k \times D^{n-1-k}$ attached along $S^1 \times \partial D^k \times D^{n-1-k}$. It is a basic fact that an $n$-dimensional round $k$-handle can be decomposed into an $n$ dimensional $k$-handle and an $n$ dimensional $(k+1)$-handle. Consider in particular the case of a 5-dimensional round 2-handle $S^1 \times D^2 \times D^2$ attached along $S^1 \times \partial D^2 \times D^2$ to $X \times I$ for some 4--manifold $X$. This defines a \emph{round cobordism} between $X$ and $X'$, where $X'$ can be obtained from $X$ by removing the attaching region $S^1 \times \partial D^2 \times D^2$ and gluing $S^1 \times D^2 \times \partial D^2$ by the identification of their boundary. Observe that this is simply an integral torus surgery on the torus $\mathcal{T} = S^1 \times \partial D^2 \times \{0\}$.
Furthermore, the converse is also true: any integral torus surgery corresponds to a cobordism given by a 5-dimensional round 2-handle (see \cite{BS} Lemma 2). The torus, multiplicity, and direction of the surgery determine how the attaching region $S^1 \times \partial D^2 \times D^{2}$ of the round 2-handle is embedded.

For our purposes the most important tool will be the Fundamental Lemma of Round Handles, due to Asimov \cite{Asimov}. The Lemma states that if we attach a $k$-handle $h_k$ and a $k+1$-handle $h_{k+1}$ to a manifold \emph{independently}, then we can combine $h_k$ and $h_{k+1}$ to form a single round $k$-handle. This means that if we form a 5-dimensional cobordism $W$ from $X$ to $X'$ by independently adding a 5-dimensional 2-handle and a 5-dimensional 3-handle to $X \times I$ (so that the attaching sphere of the 3-handle is disjoint from the belt sphere of the 2-handle), then $W$ can be decomposed as $X \times I$ plus a 5-dimensional round 2-handle. By our remarks above, we see that $X'$ can be obtained from $X$ by an integral torus surgery.

We can use this method to construct 4--manifolds that can be obtained by surgery on a link of tori in $S^4$. The following theorem is proved using a technique similar to that found in \cite{Ker}, where Kervaire gives a characterization of the fundamental groups of homology spheres of dimension greater than 4.

\begin{theorem}\label{group}
Any finitely presented group with non-negative deficiency appears as the fundamental group of a 4--manifold obtained by integral surgery on a link of tori in $S^4$.
\end{theorem}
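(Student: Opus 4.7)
The plan is to adapt Kervaire's classical construction for realizing fundamental groups of high-dimensional homology spheres, translating it into the round handle language developed earlier in this section. Fix a presentation $G = \langle x_1,\ldots,x_g \mid r_1,\ldots,r_k\rangle$ with $d := g - k \geq 0$. I would build a 5-dimensional cobordism $W$ from $S^4$ to a 4-manifold $X$ out of exactly $g$ pairs, each consisting of one 2-handle and one 3-handle attached independently in the sense of Asimov's Fundamental Lemma, so that each pair fuses into a single round 2-handle. By the correspondence between round 2-handles and integral torus surgeries, $X$ will then be obtained from $S^4$ by $g$ simultaneous integral torus surgeries on a disjoint link of tori.

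Concretely: place $g$ pairwise disjoint standardly embedded 2-spheres $\Sigma_1,\ldots,\Sigma_g \subset S^4$, each in its own small ball; then $\pi_1(S^4 \setminus \bigcup_i \Sigma_i) \cong F_g$, freely generated by the meridians $\mu_i$. For $1 \leq j \leq k$, choose an embedded loop $\gamma_j$ in the complement representing the word $r_j(\mu_1,\ldots,\mu_g)$; for $k < j \leq g$, let $\gamma_j$ be a small null-homotopic loop in an auxiliary disjoint ball. By general position in $S^4$, all these spheres and loops can be arranged to have pairwise disjoint tubular neighborhoods. Attach to $S^4 \times I$ a 5-dimensional 3-handle along each $\Sigma_i$ and a 5-dimensional 2-handle along each $\gamma_j$ (each with its natural framing). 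This produces $W$ and its outgoing 4-manifold boundary $X$.

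To compute $\pi_1(X)$, reorder the handle attachments so that the $g$ 3-handles are attached first: this turns $S^4$ into $\#^g(S^1 \times S^3)$, whose fundamental group is free on the cores of the newly glued $D^3 \times S^1$ pieces, and a direct check identifies each meridian $\mu_i$ with the $i$-th such core. The 2-handles, reread in the intermediate boundary, run along loops representing the words $r_1,\ldots,r_k$ and the identity, so they kill exactly the prescribed relators and produce $\pi_1(X) = F_g / \langle\langle r_1,\ldots,r_k\rangle\rangle = G$. Now bijectively pair each 2-handle with a 3-handle; since the attaching regions for each pair lie in disjoint balls of $S^4$, the independence hypothesis of Asimov's Fundamental Lemma is automatic, and each pair combines into a single round 2-handle, realizing $W$ as the trace of $g$ integral torus surgeries on a disjoint link of tori.

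The hard part will be arranging the bijection $j \mapsto \sigma(j)$ and the geometry of the attachments so that the attaching \emph{tori} of the resulting round 2-handles are literally disjointly embedded in $S^4$, rather than being merely abstractly independent as 5-dimensional handles. To handle this I would choose $\sigma$ so that each $\gamma_j$ can be isotoped to pass near its partner $\Sigma_{\sigma(j)}$, and then thread pairwise disjoint tubes connecting each loop-sphere pair using general position in $S^4$, so that each fused round handle is carried by a genuine thickened torus in $S^4$. The framings of the 2-handles influence only the multiplicity and direction $(p, a, b)$ of the individual surgeries, not the fundamental group, so they can be chosen freely to ensure that the resulting torus surgeries are integral.
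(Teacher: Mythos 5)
Your proposal is correct and follows essentially the same route as the paper: build a 5-dimensional cobordism from $S^4 \times I$ by attaching $g$ 3-handles (to reach $\#^g S^1 \times S^3$) and then $g$ 2-handles (to impose the relators and pad with null-homotopic loops), and invoke Asimov's Fundamental Lemma of Round Handles to fuse each 3-/2-handle pair into a round 2-handle corresponding to an integral torus surgery. The only cosmetic difference is that you place the attaching loops $\gamma_j$ directly in $S^4$ before the 3-handles are attached, whereas the paper selects them afterwards inside $\#_m S^1\times S^3$; the disjointness of the resulting attaching tori, which you flag as the hard part, is the same general-position argument the paper invokes implicitly when it asserts that the round 2-handles have disjoint attaching regions.
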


\begin{proof}
The important observation is that if we form a 5-dimensional cobordism by first attaching a 3-handle $h_3$ and then a 2-handle $h_2$, the 2- and 3-handles will be attached independently (by transversality we can isotope the attaching sphere of the 2-handle off the belt sphere of the 3-handle and then off the 3-handle completely). Then by the Fundamental Lemma of Round Handles $h_3$ and $h_2$ can be isotoped to form a single 5-dimensional round 2-handle.

Now let $G = \langle g_1, g_2, \ldots, g_m | r_1, r_2, \ldots, r_n \rangle$ be a finitely presented group with deficiency $m-n \geq 0$. We construct a 4--manifold with fundamental group $G$ as follows. First attach $m$ 5-dimensional 3-handles to $S^4 \times I$ along attaching 2--spheres that form the $m$-component unlink in $S^4 \times \{1\}$. This gives a cobordism from $S^4$ to $\#_m S^1 \times S^3$. Note that $\pi_1 (\#_m S^1 \times S^3) \cong F_m$, the free group on $m$ letters. We can represent each relation $r_i$ of $G$ by an embedded curve $\rho_i$ in $\#_m S^1 \times S^3$, and we can assume these curves are disjoint. Then we attach $n$ 5-dimensional 2-handles along the $\rho_i$. This has the affect of surgering out a neighborhood of each $\rho_i$, which is a copy of $S^1 \times D^3$, and gluing in a copy of $D^2 \times S^2$. We see that in the resulting 4--manifold each $\rho_i$ will be nullhomotopic, and so the new fundamental group will be exactly $G$. Lastly we attach $m-n$ more 2-handles along disjoint nullhomotopic curves in the boundary. This will leave the fundamental group unchanged, but now we have a cobordism $W$ between $S^4$ and a 4--manifold $X$ formed by first attaching $m$ 3-handles, and then attaching $m$ 2-handles. By our comments above, we see that these handles must be attached independently, and so $W$ is formed by attaching $m$ round 2-handles to $S^4 \times I$ (whose attaching regions are disjoint). Therefore $X$ is obtained by integral surgery on a link of tori in $S^4$, and $\pi_1 (X) \cong G$.
\end{proof}

For groups of negative deficiency we have the following result.

\begin{proposition}
One can produce 4--manifolds with fundamental groups of arbitrarily large negative deficiency by surgery on tori in $S^4$.
\end{proposition}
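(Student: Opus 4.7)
The plan is to exploit a torus whose exterior group already has large negative deficiency, and then perform a surgery that has no effect on $\pi_1$. For any $N > 0$, by Levine's result cited in Section \ref{basics}, there is a 2-knot $K \subset S^4$ whose group $G := \pi_1(S^4 \setminus K)$ satisfies $\text{def}(G) \leq -N$. A 2-knot is a sphere rather than a torus, so I would first promote $K$ to a torus by forming the ambient connect sum $\mathcal{T} := K \# T_0$, where $T_0$ is an unknotted torus in a ball disjoint from $K$. The resulting surface has genus $g(K) + g(T_0) = 0 + 1 = 1$, giving an honest torus in $S^4$.

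Next I would compute $\pi_1(E_\mathcal{T})$ and select convenient surgery directions. A van Kampen argument along the neck of the connect sum yields
\[ \pi_1(E_\mathcal{T}) \;\cong\; G \ast_{\mathbb{Z}} \pi_1(E_{T_0}), \]
where the amalgamating $\mathbb{Z}$ is generated by the common meridian. Because $T_0$ is unknotted, $\pi_1(E_{T_0}) \cong \mathbb{Z}$ is itself generated by its meridian, so the extra factor is absorbed and $\pi_1(E_\mathcal{T}) \cong G$. I would then identify $S^1 \times S^1 \hookrightarrow \mathcal{T}$ so that both $\alpha$ and $\beta$ lie entirely in the $T_0$-portion of $\mathcal{T}$. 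Their canonical-framing pushoffs factor through $\pi_1(E_{T_0}) \cong \mathbb{Z}$, where they are nullhomologous by the definition of the canonical framing and hence nullhomotopic since the group is abelian; thus $[\alpha] = [\beta] = 1$ in $G$.

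Finally, I would perform the surgery $S^4_\mathcal{T}(0,1,0)$, which is valid since $\gcd(0,1,0) = 1$. By Subsection \ref{definitions}, the resulting fundamental group is $G$ modulo the single additional relation $[\alpha] = 1$, which is already satisfied in $G$; hence $\pi_1(S^4_\mathcal{T}(0,1,0)) \cong G$ has deficiency at most $-N$. Since $N$ was arbitrary, the proposition follows. The main technical obstacle I anticipate is the van Kampen identification $\pi_1(E_\mathcal{T}) \cong G$ for the ambient connect sum, together with the compatibility of the canonical framings of $T_0$ and $\mathcal{T}$ on the $T_0$-portion; both reduce to the standard analysis of ambient connect sum of surface knots and the fact that $\pi_1(E_{T_0})$ is abelian and generated by a meridian.
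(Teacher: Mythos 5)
Your proposal is correct and follows essentially the same approach as the paper: start with Levine's $2$--knots of arbitrarily large negative deficiency, stabilize to a torus by adding a trivial tube (which you phrase equivalently as ambient connect sum with the unknotted torus, and which preserves $\pi_1$ of the exterior), and then perform a multiplicity~$0$ surgery along a direction whose pushoff is already nullhomotopic in the exterior so that $\pi_1$ is unchanged. The paper argues the nullhomotopy and the $\pi_1$--preservation directly from the tubing picture, while you supply the same facts via a van Kampen computation, but the underlying construction is identical.
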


\begin{proof}
Start with the family $\mathcal{K}_m$ of 2--knots constructed in \cite{Levine}. Levine showed that the knot group of $\mathcal{K}_m$ has deficiency $1-m$. Then we can produce a family of tori $\mathcal{T}_m$ by adding a trivial tube to each $\mathcal{K}_m$ in a small 4--ball neighborhood of a point in $\mathcal{K}_m$ (see Figure \ref{tube}). This doesn't change the fundamental group of the exterior, and the $T^3$ boundary of $E_{\mathcal{T}_m}$ will have two $S^1$ factors that are nullhomotopic in the exterior (the third $S^1$ factor is the meridinal direction). We can then perform a multiplicity 0 torus surgery on $\mathcal{T}_m$ with surgery direction either of the nullhomotopic $S^1$ factors of the boundary. The result of this surgery will have the same fundamental group as the exterior since the attaching curve of the 2-handle is already nullhomotopic in the exterior, and so we get groups with arbitrarily large negative deficiency.
\end{proof}

\begin{figure}
\includegraphics[scale=0.3]{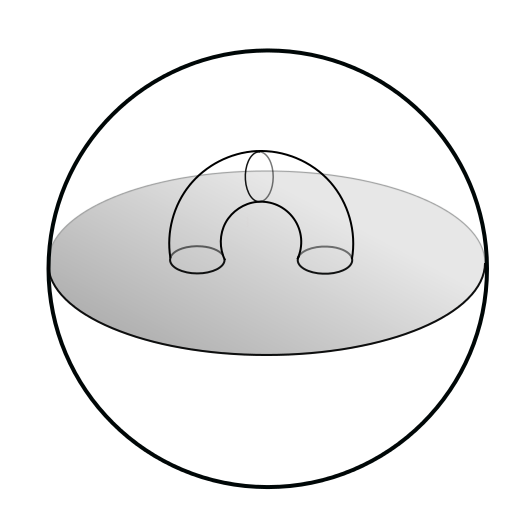}
\centering
 \caption{Adding a trivial tube to a 2--knot. Here we see a a slice $B^3 \times \{1/2\}$ of a 4--ball neighborhood $B^3 \times I$ of a point on a 2--knot. The disk bounded by the equator is a small patch of the 2--knot, and we add a small tube to increase the genus of the surface. Notice that each of the $S^1$ factors bounds a disk in the exterior.}
\label{tube}
\end{figure}


However, it is not possible to achieve all finitely presented groups. Hausmann and Weinberger \cite{HW} constructed a finitely presented group that cannot be realized as the fundamental group of a 4--manifold with Euler characteristic 2. Therefore it seems difficult to give a complete characterization of the possible fundamental groups of 4--manifolds obtained by torus surgery in $S^4$.

We can use round handles to give another description of 4--manifolds obtained by torus surgery in $S^4$. Here \emph{spherical surgery} means replacing an embedded copy of $S^2 \times D^2$ with $S^1 \times D^3$. 

\begin{proposition}
Let $X$ be a 4--manifold obtained by integral surgery on a single torus in $S^4$. Then $X$ can also be obtained by a spherical surgery on an embedded $S^2$ in $S^2 \times S^2$ or $S^2 \mathbin{\widetilde{\times}} S^2$.
\end{proposition}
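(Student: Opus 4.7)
The strategy is to view the torus surgery through the round-handle perspective from Section \ref{round}. Let $W$ be the trace of the surgery, a $5$-dimensional cobordism from $S^4$ to $X$. Because the surgery is integral, $W$ is obtained from $S^4 \times I$ by attaching a single $5$-dimensional round $2$-handle $R \cong S^1 \times D^2 \times D^2$ along a torus neighborhood $T^2 \times D^2 \subset S^4 \times \{1\}$. A $5$-dimensional round $2$-handle admits the standard decomposition as a $5$-dimensional $2$-handle $h_2 = D^2 \times D^3$ attached first, followed by a $5$-dimensional $3$-handle $h_3 = D^3 \times D^2$, obtained by writing $S^1 = D^0 \cup D^1$ and thickening by $D^2 \times D^2$.

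Let $Y$ denote the intermediate $4$-manifold obtained by attaching only $h_2$ to $S^4 \times I$. The handle $h_2$ is attached along some embedded circle $C \subset S^4 \times \{1\}$, and since every embedded $S^1$ in $S^4$ is unknotted, its tubular neighborhood is standardly $S^1 \times D^3$; attaching $h_2$ removes this $S^1 \times D^3$ and glues in $D^2 \times S^2$. Using the standard decomposition $S^4 = (S^1 \times D^3) \cup_{\mathrm{id}} (D^2 \times S^2)$ coming from $\partial(D^2 \times D^3)$, we see that $Y$ is a union of two copies of $D^2 \times S^2$ glued along their common boundary $S^1 \times S^2$. Such gluings are classified up to isotopy by the framing, whose class lies in $\pi_1(SO(3)) \cong \mathbb{Z}/2$; the two framings give $Y \cong S^2 \times S^2$ and $Y \cong S^2 \widetilde{\times} S^2$ respectively.

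Finally, $h_3$ is attached to $Y \times I$ along an embedded $2$-sphere $\Sigma \subset Y$ with product neighborhood $S^2 \times D^2$; the attachment removes $S^2 \times D^2$ and glues in $S^1 \times D^3$, which is precisely a spherical surgery on $\Sigma$. The resulting $4$-manifold is $X$, as required. The main subtleties are justifying the $h_2$-then-$h_3$ ordering of the round-handle decomposition (the reverse of the ordering exploited in the proof of Theorem \ref{group}, but equally a consequence of Asimov's lemma applied to the product decomposition $S^1 \times D^2 \times D^2$) and the identification of $Y$ with one of the two $S^2$-bundles over $S^2$; once these are established the spherical surgery interpretation of $h_3$ is immediate.
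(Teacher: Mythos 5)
Your proof is correct and takes essentially the same approach as the paper's: decompose the round $2$--handle into a $2$--handle $h_2$ followed by a $3$--handle $h_3$, observe that $h_2$ carries $S^4$ to one of the two $S^2$--bundles over $S^2$ depending on the $\mathbb{Z}/2$ framing of the attaching circle, and interpret the attachment of $h_3$ as a spherical surgery. You supply a bit more explicit bookkeeping than the paper (writing out $Y$ as a union of two copies of $D^2\times S^2$ glued along $S^1\times S^2$, and tracing the framing ambiguity to $\pi_1(SO(3))$), but the decomposition, the key identification of the intermediate manifold, and the final surgery interpretation all coincide with the published argument.
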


Recall that $S^2 \mathbin{\widetilde{\times}} S^2$ is the twisted $S^2$ bundle over $S^2$, and is diffeomorphic to $\mathbb{CP}^2 \# \overline{\mathbb{CP}^2}$.

\begin{proof}
We saw above that $X$ can be obtained by attaching a 5-dimensional round 2-handle $R$ to $S^4 \times I$. Furthermore, $R$ can be decomposed into a 2-handle $h_2$ and 3-handle $h_3$. A 5-dimensional 2-handle is a copy of $D^2 \times D^3$ attached along $\partial D^2 \times D^3$. Up to isotopy there is a unique circle in $S^4$, and there are two surgeries on this circle corresponding to a $\mathbb{Z}_2$ choice of framing. The resulting 4--manifolds are $S^2 \times S^2$ and $S^2 \mathbin{\widetilde{\times}} S^2$. Hence $h_2$ gives a cobordism from $S^4$ to $M$, where $M$ is one of the $S^2$ bundles over $S^2$. We complete our cobordism to $X$ by attaching the 3-handle $h_3$ to $M$. A 5-dimensional 3-handle is a copy of $D^3 \times D^2$ attached along $\partial D^3 \times D^2$. Observing how the boundary changes, we see that $X$ is obtained by spherical surgery on $\partial D^3 \times \{0\}$ in $M$, and so the result follows.

\end{proof}

\begin{remark}
Using similar techniques it is not hard to show that if $X$ is the result of surgery on a \emph{link} of tori, then $X$ can be obtained by a set of spherical surgeries in $\#_k S^2 \times S^2$ or $\#_k S^2 \mathbin{\widetilde{\times}} S^2$ for some $k$. For non-integral surgeries we use the fact pointed out in \cite{BS} that the result of a non-integral surgery can be obtained as a set of integral surgeries.
\end{remark}

\end{section}


\begin{section}{The unknotted torus and embedding 3--manifolds}\label{unknot}

The unknotted torus is the unique torus in $S^4$ that bounds a solid torus $S^1 \times D^2$.
In \cite{Mont1}, Montesinos analyzed which gluing diffeomorphisms extend over the exterior of the unknotted torus (this exterior is the so-called \emph{standard twin}). He used this to prove the following theorem; here instead we give a short proof using Cerf's theorem, which states that any self-diffeomorphism of $S^3$ can be extended to a self-diffeomorphism of $B^4$.

\begin{theorem}\label{mult1}\cite{Mont1}
Let $\mathcal{T}$ be the unknotted torus in $S^4$. Then the result of any multiplicity 1 surgery on $\mathcal{T}$ is diffeomorphic to $S^4$.
\end{theorem}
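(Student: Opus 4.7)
The plan is to construct self-diffeomorphisms of $S^4$ that preserve $\mathcal{T}$ and shift its canonical framing, using Cerf's theorem applied to disks bounded by $\alpha$ and $\beta$. Composing these, any $(1,a,b)$ surgery becomes the trivial surgery relative to a modified framing, and the trivial surgery returns $S^4$.

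For the setup, I would realize $\mathcal{T}$ as the standard Heegaard torus in the equatorial $S^3 \subset S^4 = B^4_+ \cup_{S^3} B^4_-$, with $S^3 = V_1 \cup_\mathcal{T} V_2$ the genus-one Heegaard splitting. Choose $\alpha$ and $\beta$ to be the meridians of $V_1$ and $V_2$ respectively, so that both bound embedded disks $D_\alpha \subset V_1$ and $D_\beta \subset V_2$ inside $S^3 \subset S^4$.

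The key construction is a twisting self-diffeomorphism. Take a tubular neighborhood $\nu D_\alpha \cong B^4$ of $D_\alpha$ in $S^4$. Its boundary $\partial \nu D_\alpha$ is an $S^3$ containing $\alpha$ as the unknotted core of one of the natural Heegaard solid tori. A Dehn twist on $\partial \nu D_\alpha$ along a Heegaard torus parallel to $\alpha$ yields a self-diffeomorphism of this $S^3$; by Cerf's theorem it extends over $\nu D_\alpha = B^4$. Extending by the identity outside $\nu D_\alpha$ produces a self-diffeomorphism $\Psi_\alpha \co S^4 \to S^4$ whose effect on the canonical framing of $\mathcal{T}$ is the shear $m \mapsto m + \alpha$, $\alpha \mapsto \alpha$, $\beta \mapsto \beta$. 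The analogous construction using $D_\beta$ yields $\Psi_\beta$ realizing $m \mapsto m + \beta$.

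Composing, $\Psi_\alpha^a \circ \Psi_\beta^b$ shifts the canonical framing of $\mathcal{T}$ by $m \mapsto m + a\alpha + b\beta$. Under this self-diffeomorphism of $S^4$, the $(1, a, b)$ surgery with respect to the original framing is identified with the trivial $(1, 0, 0)$ surgery relative to the new framing, which gives $S^4$. Hence $S^4_\mathcal{T}(1, a, b) \cong S^4$.

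The main technical hurdle is verifying that the Cerf-extended Dehn twist produces precisely the shear $m \mapsto m + \alpha$ on the canonical framing of $\mathcal{T}$. This amounts to analyzing how $\mathcal{T}$ meets $\partial \nu D_\alpha$ (in a pair of parallel unknotted circles cobounding an annulus in $\mathcal{T}$) and tracking how the twist on the normal framing of $\alpha$ in $S^4$ translates to a shift of the meridian of $\mathcal{T}$. Once this identification is carried out, the composition argument together with Cerf's theorem yields the result immediately.
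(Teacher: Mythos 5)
Your approach is genuinely different from the paper's. The paper places $\mathcal{T}$ in the equatorial $S^3$, chooses a gluing map supported on one face $\mathcal{T} \times I_0 \times \{0\}$ of the neighborhood $\nu\mathcal{T} = \mathcal{T}\times I_0 \times I_1$, observes that the resulting manifold is exactly $S^4$ cut along an equatorial $S^3$ and re-glued by a twist along $\mathcal{T}$, and applies Cerf's theorem once to that single $S^3$. You instead try to build explicit framing-shifting self-diffeomorphisms $\Psi_\alpha,\Psi_\beta$ of the pair $(S^4,\mathcal{T})$ and conjugate the surgery to the trivial one. That strategy is viable in spirit --- such diffeomorphisms do exist for the unknotted torus, essentially because the paper's cut-and-reglue argument produces them implicitly.

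However, the construction of $\Psi_\alpha$ as written does not parse. You propose to take the Dehn twist on $\partial\nu D_\alpha \cong S^3$, extend it over $\nu D_\alpha \cong B^4$ by Cerf, and then ``extend by the identity outside $\nu D_\alpha$.'' These two pieces do not glue: the Cerf extension restricts to the Dehn twist on $\partial\nu D_\alpha$, while the identity on the complement restricts to the identity there, so the putative $\Psi_\alpha$ is not even continuous along $\partial\nu D_\alpha$. To obtain a genuine diffeomorphism you must either also extend the twist over the complementary $B^4$ by Cerf (but then you have no control over what the extension does to the rest of $\mathcal{T}$, which runs through both sides), or use the fact that the twist is isotopic to the identity on $S^3$ to damp it out through a collar inside $\nu D_\alpha$, producing a compactly supported diffeomorphism --- a different construction than the one you describe, and one for which the preservation of $\mathcal{T}$ and the framing shear $m\mapsto m+\alpha$ would still need to be established. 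In short, the framing-shift computation you flag as the ``main technical hurdle'' is downstream of a more basic problem: the map $\Psi_\alpha$ has not actually been constructed. The paper's proof avoids all of this by never constructing a self-diffeomorphism of $S^4$ at all; it just rewrites the surgery as a cut-and-reglue of $S^4$ along a single $3$--sphere and invokes Cerf's theorem directly.
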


\begin{proof}
Fix a particular multiplicity 1 surgery on $\mathcal{T}$, and let $q$ and $\gamma$ be the corresponding auxiliary multiplicity and direction of the surgery. Since $\mathcal{T}$ is unknotted, we can isotope it so it lies embedded in the standard $S^3$ equator of $S^4$.
Then a neighborhood of $\mathcal{T}$ in $S^4$ is $\mathcal{T} \times I_0 \times I_1$, where $\mathcal{T} \times I_0$ is a neighborhood of $\mathcal{T}$ in $S^3$ and $I_1 = [0,1]$ is the interval induced from a collar neighborhood of $S^3$ in $S^4$. We perform the surgery as follows. We can remove $\mathcal{T} \times I_0 \times I_1$ and re-glue by any diffeomorphism of the boundary with multiplicity 1, auxiliary multiplicity $q$, and direction $\gamma$. Therefore we can choose the gluing map to be the identity map on $\mathcal{T} \times \partial I_0 \times I_1 \cup \mathcal{T} \times I_0 \times \{1\}$ and on $\mathcal{T} \times I_0 \times \{0\}$ to be the map that twists $\mathcal{T}$ in the $\gamma$ direction $q$ times. Finally, we observe that this surgery is equivalent to cutting $S^4$ along $S^3 \times \{0\}$ and re-gluing by the diffeomorphism of $S^3$ given by twisting $\mathcal{T}$ in the $\gamma$ direction $q$ times. By Cerf's theorem this diffeomorphism extends over $B^4$ and so we get back $S^4$.
\end{proof}

Now we will use Montesinos' work to show that the result of multiplicity 0 surgery on the unknotted torus is also quite restrictive, although as we saw in Proposition \ref{spinprop} we should obtain both spin and non-spin manifolds. This proposition was proved by Pao \cite{Pao} (at least in the topological category) in the context of torus actions on 4--manifolds. Iwase also gives a proof in \cite{Iw3}; in fact, Iwase gives a similar classification for surgery on the unknotted torus for any multiplicity.

\begin{theorem}\label{mult0}
The result of multiplicity 0 surgery on the unknotted torus $\mathcal{T}$ is either $S^1 \times S^3 \# S^2 \times S^2$ or $S^1 \times S^3 \# S^2 \mathbin{\widetilde{\times}} S^2$. Indeed, 
$S^4_\mathcal{T}(0,a,b)$ is diffeomorphic to $S^1 \times S^3 \# S^2 \times S^2$ if $ab$ is even, and $S^1 \times S^3 \# S^2 \mathbin{\widetilde{\times}} S^2$ if $ab$ is odd.
\end{theorem}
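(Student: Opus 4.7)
My plan is to adapt the approach of Theorem \ref{mult1} through an explicit Kirby calculus argument. I would isotope $\mathcal{T}$ to the standard Heegaard torus in the equatorial $S^3 \subset S^4$, so that $S^3 = V_1 \cup_\mathcal{T} V_2$ with $\alpha$ bounding a meridian disk in $V_1$ and $\beta$ bounding one in $V_2$. This symmetric placement lets me set up a concrete handle picture for both the exterior of $\mathcal{T}$ and the surgery itself.

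Next, I would fix a handle decomposition of the exterior $E_\mathcal{T}$ -- Montesinos' \emph{standard twin} -- whose handle structure is consistent with the homology $H_1(E_\mathcal{T}) \cong \Z$, $H_2(E_\mathcal{T}) \cong \Z \oplus \Z$ computed in Section \ref{basics}, with generators of $H_2$ given by the rim tori on $\partial E_\mathcal{T}$. The multiplicity $0$ surgery then attaches one further $2$-handle along the curve $c = a\alpha + b\beta$ on $\partial \nu \mathcal{T} = T^3$, with the framing coming from the canonical product framing of the boundary, followed by the two $3$-handles and $4$-handle of $T^2 \times D^2$.

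I would then simplify this diagram by handle slides and cancellations. After sliding the surgery $2$-handle through the $1$-handles of $E_\mathcal{T}$ the appropriate number of times, its attaching curve becomes an unknot in the complement of the $1$-handles; subsequent $2$/$3$-handle cancellations reduce the diagram to one that visibly describes $S^1 \times S^3 \# L$, where $L$ is the closed $S^2$-bundle over $S^2$ whose type is determined by the framing of the surviving $2$-handle. The resulting $H_1 \cong \Z$, $H_2 \cong \Z^2$ (computed as in Section \ref{basics} for $p=0$) is consistent with this conclusion.

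The crux of the argument is to compute the framing of the surgery $2$-handle modulo $2$ after all the slides. The canonical product framing on $\partial \nu \mathcal{T}$, propagated through the slides, picks up a twist controlled by the self-intersection number of $c$ with a parallel pushoff, which reduces mod $2$ to $ab$. Hence the framing is even precisely when $ab$ is even, giving $S^1 \times S^3 \# S^2 \times S^2$, and odd otherwise, giving $S^1 \times S^3 \# S^2 \widetilde{\times} S^2$. The main technical obstacle is careful bookkeeping of this framing through the sequence of slides; as an independent sanity check, Proposition \ref{spinprop} predicts that spinness of $S^4_\mathcal{T}(0,a,b)$ is governed by the parity of $ab$, matching the spin/non-spin dichotomy between $S^2 \times S^2$ and $S^2 \widetilde{\times} S^2$.
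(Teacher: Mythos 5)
Your plan is a genuinely different route from the paper's, and it has a real gap at the crucial step. The paper proceeds in two stages: it first invokes Montesinos' characterization of which gluing diffeomorphisms extend over the exterior of the unknotted torus (the standard twin) — namely those represented by matrices $\psi$ with even upper-left $2\times 2$ block — and shows via an algebraic lemma on even matrices in $GL(2,\Z)$ that every pair $(a,b)$ with $ab$ even can be carried to $(1,0)$ and every pair with $ab$ odd to $(1,1)$ by such a $\psi$. Only then does it do Kirby calculus, and only for the two representative surgeries $S^4_\mathcal{T}(0,1,0)$ and $S^4_\mathcal{T}(0,1,1)$. You instead propose to run the Kirby calculus directly for arbitrary $(a,b)$ and read off the final framing mod~$2$.

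The gap is in the sentence asserting that after the slides the framing ``picks up a twist controlled by the self-intersection number of $c$ with a parallel pushoff, which reduces mod $2$ to $ab$.'' As stated this is not a computation but the statement of the conclusion. A curve on $T^2$ has no canonical self-intersection number — the pushoff depends on a choice of framing — and the quantity $ab$ arises from the Rokhlin quadratic form $q(a\alpha+b\beta)=a^2 q(\alpha)+b^2 q(\beta)+ab$ together with the fact that $\mathcal{T}$ is spin embedded, which is the content of Proposition~\ref{spinprop}. To make your argument rigorous you would need to track, through the sequence of slides over the $1$-handles and the $2$-handles of $E_\mathcal{T}$, how the canonical product framing of $\partial\nu\mathcal{T}$ transforms into a framing number on the simplified unknotted attaching circle, and show this equals $ab$ mod~$2$. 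For general $(a,b)$ (including non-integral surgeries where $\gcd(a,b)>1$ is impossible but $|a|,|b|>1$ happens) the curve $c$ winds many times around both directions, and this bookkeeping is not at all immediate; it is exactly the difficulty the paper sidesteps by reducing to $(1,0)$ and $(1,1)$ first. Your invocation of Proposition~\ref{spinprop} as a sanity check is sound but cannot close the gap: it tells you which surgeries give spin manifolds, but not that the diffeomorphism type is $S^1\times S^3\#S^2\times S^2$ versus $S^1\times S^3\#S^2\widetilde\times S^2$ — a priori other homology types or exotic possibilities would need to be excluded. You should either prove the framing claim directly, or adopt the paper's strategy of first using Montesinos' extension result to reduce to the two model cases before touching the Kirby diagram.
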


Here we choose our embedding of $\mathcal{T}$ by realizing the unknotted torus as the spin of the unknot $U \subset S^3$, so that the first $S^1$ factor $\alpha$ is identified with a longitude of $U$ and the other $S^1$ factor $\beta$ is identified with the $S^1$ direction of the spin.


\begin{proof}
Montesinos \cite{Mont1} showed that gluing maps of the form
\[\psi = \left( \begin{array}{ccc}
c & d & * \\
e & f & * \\
0 & 0 & 1 \end{array} \right)\]
(where $c+d+e+f$ is an even number) extend over the exterior of the unknotted torus and hence don't affect the resulting diffeomorphism type. We will show that any choice of $a$ and $b$ (necessarily relatively prime) can be obtained by starting with a gluing map that has direction $\gamma =  [\alpha]$ or $\gamma = [\alpha] + [\beta]$ and then post-composing with a gluing map of the form $\psi$ above. Then by Montesinos' result we see that $\psi$ extends over the exterior and so the two possible resulting diffeomorphism types are $S^4_\mathcal{T}(0,1,0)$ and $S^4_\mathcal{T}(0,1,1)$. In the following lemma we will show that these manifolds are diffeomorphic to $S^1 \times S^3 \# S^2 \times S^2$ and $S^1 \times S^3 \# S^2 \mathbin{\widetilde{\times}} S^2$, respectively.

Suppose the direction of the surgery is $\gamma = a[\alpha] + b[\beta] \in H_2(\mathcal{T})$. Then we can choose the gluing map to be
\[\phi = \left( \begin{array}{ccc}
0 & m & a \\
0 & n & b \\
1 & 0 & 0 \end{array} \right)\]
for some integers $m$ and $n$ satisfying $mb-na = 1$. Post-composing with a map $\psi$ as above has the effect of changing the direction 
$\begin{pmatrix}
  a \\
  b
 \end{pmatrix}$
by multiplying by the  even matrix 
$\begin{pmatrix}
  c & d \\
  e & f
 \end{pmatrix}$
(we will say a matrix is \emph{even} if the sum of its entries is even). In light of this it is sufficient to show that for any relatively prime pair
$\begin{pmatrix}
  a \\
  b
 \end{pmatrix}$
there is an even matrix $A$ such that if $ab$ is an odd number then we have 
$\begin{pmatrix}
  a \\
  b
 \end{pmatrix} = A \begin{pmatrix}
  1 \\
  1
 \end{pmatrix}$
and if $ab$ is an even number then
$\begin{pmatrix}
  a \\
  b
 \end{pmatrix} = A \begin{pmatrix}
  1 \\
  0
 \end{pmatrix}$.
First assume that $ab$ an odd number. Then $a + b$ is an even number. There exists some matrix in $GL(2, \mathbb{Z})$ such that
$\begin{pmatrix}
  a \\
  b
 \end{pmatrix} =
\begin{pmatrix}
  c & d \\
  e & f
 \end{pmatrix}
\begin{pmatrix}
  1 \\
  1
 \end{pmatrix}=
\begin{pmatrix}
  c+d \\
  e+f
 \end{pmatrix}$.
Since $a+b = (c+d) + (e+f)$ is even, we see that this matrix is even.

Now assume that $ab$ is an even number. Then $a+b$ is an odd number. Now we can write
$\begin{pmatrix}
  a \\
  b
 \end{pmatrix} =
\begin{pmatrix}
  a & -d \\
  b &c
 \end{pmatrix}
\begin{pmatrix}
  1 \\
  0
 \end{pmatrix}$
for some integers $c$ and $d$ that are solutions to the equation $ax + by = 1$. Given one such pair of solutions $(c, d)$, it is known that all other solutions are of the form $(c + kb, d-ka)$ for some integer $k$. In particular, this implies that it is always possible to choose a pair of solutions with opposite parity (since $a$ and $b$ necessarily have opposite parity), and so we can choose the above matrix to be even.

To complete the proof we need to show that $S^4_\mathcal{T}(0,1,0)$ is diffeomorphic to $S^1 \times S^3 \# S^2 \times S^2$ and $S^4_\mathcal{T}(0,1,1)$ is diffeomorphic to $S^1 \times S^3 \# S^2 \widetilde{\times} S^2$. We will do in the following lemma. 
\end{proof}

\begin{figure}
\includegraphics[scale=0.3]{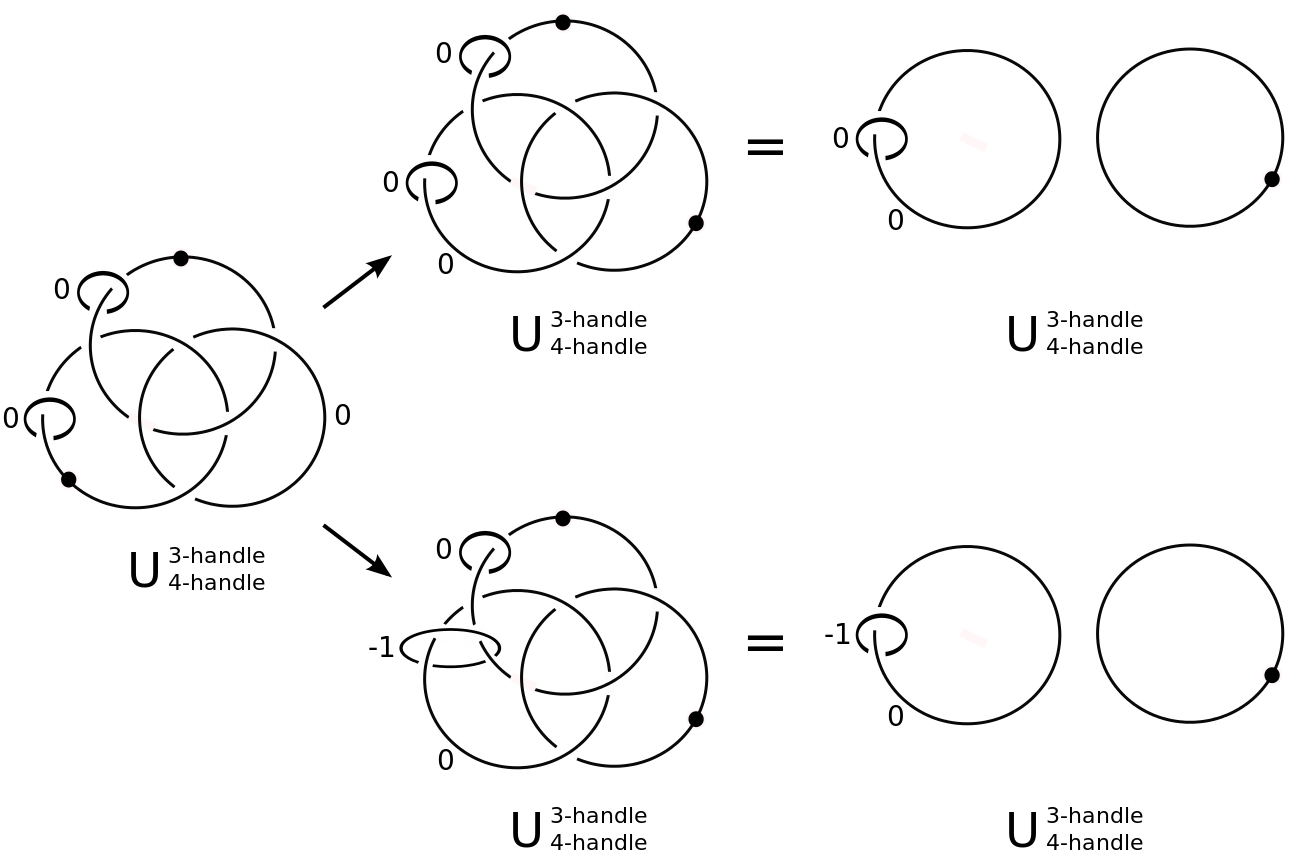}
\centering
 \caption{Multiplicity 0 surgery on the unknotted torus.}
\label{surgery}
\end{figure}

\begin{lemma}
For the unknotted torus $\mathcal{T} \subset S^4$, we have $S^4_\mathcal{T}(0,1,0)$ is diffeomorphic to $S^1 \times S^3 \# S^2 \times S^2$ and $S^4_\mathcal{T}(0,1,1)$ is diffeomorphic to $S^1 \times S^3 \# S^2 \widetilde{\times} S^2$.
\end{lemma}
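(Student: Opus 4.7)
The plan is to identify these two manifolds directly via Kirby calculus, exploiting the representation of the unknotted torus as the spin $\mathcal{T} = U\times S^1 \subset B^3\times S^1 \subset S^4$ of the unknot $U\subset B^3$.

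For $S^4_{\mathcal{T}}(0,1,0)$, I first observe that because the multiplicity and $\beta$-coefficient both vanish, the surgery gluing map is $S^1$-equivariant in the spinning direction. Hence the torus surgery is just the $S^1$-product of the corresponding $3$-dimensional Dehn surgery: one performs $0$-framed Dehn surgery on $U\subset B^3$, crosses with $S^1$, and caps with $S^2\times D^2$. Since $0$-surgery on the unknot yields $S^1\times S^2$, we obtain $S^4_{\mathcal{T}}(0,1,0) \cong \mathrm{spin}(S^1\times S^2) = (S^1\times S^2)^\circ \times S^1 \cup S^2\times D^2$.

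To identify $\mathrm{spin}(S^1\times S^2)$ as $S^1\times S^3 \# S^2\times S^2$, I would build an explicit Kirby diagram. Starting from the $3$-dimensional handle decomposition $(S^1\times S^2)^\circ = B^3\cup h_1\cup h_2$ (with $h_2$ attached along a meridian of the solid torus $B^3\cup h_1$), crossing with $S^1$ splits each $k$-handle into a $4$-dimensional $k$-handle and $(k{+}1)$-handle with attaching data inherited from the original. Attaching $S^2\times D^2$ then contributes one additional $2$-handle and a $4$-handle. The extra $2$-handle from the spin cap runs exactly once over the $1$-handle produced by $B^3\times D^1$ (the ``spinning'' $1$-handle), so this $1$-/$2$-handle pair cancels. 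Standard slides then reduce the remaining diagram to a single dotted $1$-handle (completed by the unique $3$- and $4$-handles to form $S^1\times S^3$) together with a $0$-framed Hopf link of two $2$-handles (giving $S^2\times S^2$).

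For $S^4_{\mathcal{T}}(0,1,1)$, the direction $\alpha+\beta$ is no longer parallel to the spinning factor, so the $S^1$-equivariant interpretation fails. Instead I would repeat the Kirby calculation with the attaching curve of the surgery $2$-handle changed from $\alpha$ to $\alpha+\beta$. The local effect in the simplified diagram is to shift the framing of one of the Hopf-linked $2$-handles by $\pm 1$, converting the hyperbolic pairing on the pair of $2$-handles into the odd diagonal form and hence swapping the $S^2\times S^2$ summand for $S^2\mathbin{\widetilde{\times}}S^2 \cong \mathbb{CP}^2\#\overline{\mathbb{CP}^2}$. That the outcome is non-spin agrees with Proposition~\ref{spinprop}, since $ab = 1$ is odd. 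The main technical obstacle is careful bookkeeping of attaching circles and framings through the crossing-with-$S^1$ step, and verifying that the claimed $1$-/$2$-handle cancellation collapses the diagram precisely to the advertised connected sum; once this is secured for $(0,1,0)$, the $(0,1,1)$ case follows from the localized framing-parity shift.
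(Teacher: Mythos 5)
Your $(0,1,0)$ argument takes a genuinely different route from the paper and is a nice one. The paper draws the Borromean-ring Kirby diagram for $\nu\mathcal{T}$ embedded in $S^4$ (Figure \ref{surgery}), reglues by an explicit $T^3$ diffeomorphism, and performs handle moves; you instead observe that because the gluing data $(0,1,0)$ is constant in the $\beta$ direction, the torus surgery is exactly the $S^1$-equivariant (spun) version of $0$-surgery on the unknot, so $S^4_\mathcal{T}(0,1,0) = \mathrm{spin}(S^1\times S^2)$, and then identify this manifold by Kirby calculus. This is really the content of Proposition \ref{spinning manifolds} applied to a one-component link, and it buys you a clean conceptual reason that the answer is \emph{spin} (any $\mathrm{spin}(M) = \partial(M^\circ\times D^2)$ bounds a parallelizable $5$-manifold), independent of the handle bookkeeping. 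The handle calculus you sketch for $\mathrm{spin}(S^1\times S^2)$ is plausible; minor slip: the spinning $1$-handle comes from splitting the round $0$-handle $B^3\times S^1$, not from "$B^3\times D^1$."

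The $(0,1,1)$ case, however, has a real gap as written. You propose to "repeat the Kirby calculation with the attaching curve of the surgery $2$-handle changed from $\alpha$ to $\alpha+\beta$" and to read the effect in the \emph{simplified} diagram as a framing shift. But your simplified diagram was reached only \emph{after} invoking the spin reduction and cancelling the spinning $1$-handle against the cap $2$-handle; the $\beta$ direction has disappeared from the picture at that point, so there is nothing left for an $\alpha+\beta$ curve to wind around. For $(0,1,1)$ the $S^1$-equivariant reduction is unavailable (as you note), so the simplified diagram is not one you are entitled to start from. You would need to either carry the attaching curve through the cancellations explicitly (handle slides will indeed pick up a framing twist, as in the paper's argument), or build a Kirby diagram for the torus surgery directly, which is what the paper does with the Borromean-rings presentation of $\nu\mathcal{T}$. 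The conclusion — hyperbolic form replaced by the odd rank-two form, consistent with Proposition \ref{spinprop} since $ab=1$ is odd — is correct, but the framing-shift step needs to be justified at the level of an actual diagram that still records the $\beta$ direction, not asserted in the reduced one.
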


\begin{proof}
We prove this using a Kirby calculus description of torus surgery (see \cite{GS} Chapter 8 for a thorough explanation of this perspective). On the left hand of Figure \ref{surgery} we see the unknotted torus embedded in $S^4$. The Borromean rings consisting of two 1-handles in dotted circle notation and one 0 framed 2-handle give a handle decomposition of $\nu \mathcal{T}$ (here we clearly see the boundary is $T^3$), and the rest of the handles give the embedding into $S^4$. The top row of Figure \ref{surgery} corresponds to doing multiplicity 0 surgery with direction $[\alpha]$. We cut out $\nu \mathcal{T}$, and re-glue by the diffeomorphism of $T^3$ that cyclically permutes the three $S^1$ factors, giving the middle top picture. A handle cancellation results in $S^1 \times S^3 \# S^2 \times S^2$, proving the first part of the lemma.

The bottom row of Figure \ref{surgery} corresponds to multiplicity 0 surgery with direction $[\alpha] + [\beta]$. Here we cut out $\nu \mathcal{T}$ and re-glue by the diffeomorphism of $T^3$ that first cyclically permutes the three $S^1$ factors and then applies $\phi$, where $\phi$ is the map that cuts along $T^2 \times \{pt\} \subset T^2 \times \partial D^2$ and re-glues by a map that puts a full rotation in the direction of the first $S^1$ factor and is the identity on the second factor (this is analogous to a Dehn twist). Observe that $\phi$ sends any curve intersecting $T^2 \times \{pt\}$ to a curve that also wraps around the first $S^1$ factor, and adds a twist to the framing. The result is the bottom middle picture, and handle slides and a cancellation show that this is diffeomorphic to $S^1 \times S^3 \# S^2 \widetilde{\times} S^2$.

%
%
\end{proof}

\begin{subsection}{Embeddings}

We can now use Theorem \ref{mult0} to prove a theorem about embedding 3--manifolds obtained by surgery on a knot in $S^3$.

\begin{theorem}
Let $K$ be a knot in $S^3$ and let $S^3_{p/q}(K)$ denote $p/q$ Dehn surgery on $K$. Then $S^3_{p/q}(K)$ smoothly embeds in $S^1 \times S^3 \# S^2 \times S^2$ if $pq$ is even, and embeds in $S^1 \times S^3 \# S^2 \widetilde{\times} S^2$ if $pq$ is odd.
\end{theorem}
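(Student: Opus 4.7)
The strategy is to realize $S^3_{p/q}(K)$ as an embedded cross-section of a 4-dimensional torus surgery performed on an unknotted torus in $S^4$, then invoke Theorem \ref{mult0} to identify the ambient 4-manifold.

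First I embed $K \subset S^3$ and view $S^3$ as the equator of $S^4$. The boundary torus $\mathcal{T} = \partial \nu K$ is then unknotted in $S^4$ because it bounds the solid torus $\nu K \subset S^3 \subset S^4$. I parametrize $\mathcal{T}$ so that $\alpha$ corresponds to the Seifert longitude $\lambda_K$ and $\beta$ to the meridian $\mu_K$, checking that this agrees, possibly after swapping $\alpha \leftrightarrow \beta$, with the spin-of-unknot parametrization used in Theorem \ref{mult0} (the swap is harmless since the parity of $ab$ is symmetric in the two factors). Next I perform multiplicity $0$ torus surgery on $\mathcal{T}$ with direction $\gamma = q[\alpha] + p[\beta]$, so that $(a,b) = (q,p)$. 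Theorem \ref{mult0} then identifies the resulting 4-manifold $X$ as $S^1 \times S^3 \# S^2 \times S^2$ when $pq$ is even, and $S^1 \times S^3 \# S^2 \widetilde{\times} S^2$ when $pq$ is odd, matching the dichotomy claimed in the theorem.

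To complete the proof I construct $S^3_{p/q}(K) \hookrightarrow X$ as a cross-section of the torus surgery. Write $\nu\mathcal{T} = \mathcal{T} \times I_0 \times I_1$ with $I_0$ normal to $\mathcal{T}$ in $S^3$ and $I_1$ normal to $S^3$ in $S^4$. Removing the equatorial slab $\mathcal{T} \times I_0$ splits the equatorial $S^3$ into thinner copies of $\nu K$ and $\overline{S^3 \setminus \nu K}$, each with a torus boundary. In the new manifold $X$ these two pieces are reconnected through the glued-in $T^2 \times D^2$, and taking a suitable 3-dimensional slab of $T^2 \times D^2$ whose boundary under $\phi$ meets the two newly exposed boundary tori produces an embedded closed 3-manifold $N \subset X$. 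Since the direction of the torus surgery is precisely the $p/q$-slope $\gamma = q\lambda_K + p\mu_K$ on $\mathcal{T} = \partial\nu K$, the effective gluing diffeomorphism $\psi \co \mathcal{T} \to \mathcal{T}$ reconstructing $N$ from $\nu K$ and $\overline{S^3 \setminus \nu K}$ sends $\mu_K$ to $\gamma$, so $N$ is diffeomorphic to $S^3_{p/q}(K)$.

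The main obstacle is making this cross-section construction rigorous. The surgery gluing matrix $\phi \in GL(3,\mathbb{Z})$ is not uniquely determined by the multiplicity and direction, and different compatible choices of $\phi$ yield different cross-sectional 3-manifolds; moreover, the multiplicity $0$ condition (third column $(q,p,0)^T$) forbids $\phi$ from preserving the meridional $S^1$ factor of $\partial\nu\mathcal{T}$, so the slab in $T^2 \times D^2$ cannot naively be taken as a product $T^2 \times J$ for an arc $J \subset D^2$. The technical heart of the argument is therefore to exhibit a single $\phi$ together with a coherent choice of 3-dimensional slab for which the induced cross-sectional gluing on $\mathcal{T}$ realizes $\mu_K \mapsto p\mu_K + q\lambda_K$. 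One concrete route is to work directly with the Kirby diagram decomposition of $X$ appearing in the proof of the preceding lemma, tracking the equatorial $S^3$ through the handle cancellations to read off the embedded 3-manifold.
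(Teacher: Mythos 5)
Your overall setup matches the paper exactly: place $K$ in the equatorial $S^3$, take $\mathcal{T}=\partial\nu K$ (unknotted since it bounds $\nu K$), choose the framing from the product $I_0\times I_1$, do multiplicity $0$ surgery with direction determined by the $p/q$-slope, and invoke Theorem \ref{mult0}. But the step you flag at the end—exhibiting $S^3_{p/q}(K)$ as an actual embedded cross-section—is the entire content of the theorem, and you leave it open. As written, the proposal is incomplete.

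The gap is resolved by a structural observation that is simpler than the slab/Kirby tracking you propose, and your own framing of the difficulty points in a slightly wrong direction. You do not need to reconnect the two equatorial pieces $\nu K$ and $\overline{S^3\setminus\nu K}$; you only need to fill $E_K$ with a solid torus. Take the gluing matrix
\[
\phi = \begin{pmatrix} 0 & m & p \\ 0 & n & q \\ 1 & 0 & 0 \end{pmatrix},
\]
with $mq-np=\pm1$. The first column $(0,0,1)^T$ is the key: it sends the first $S^1$ factor of the re-glued $T^2\times D^2$ to the meridional $\partial D^2$ direction of $\partial\nu\mathcal{T}$. Hence for each $u_0\in S^1$, the solid torus $\{u_0\}\times S^1\times D^2$ is glued onto a single level torus $T^2\times\{u_0\}\subset\partial E_\mathcal{T}$, with the meridian disk boundary $\{u_0\}\times\{v_0\}\times\partial D^2$ landing on the $(p,q)$-curve of that level torus. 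Since the pushed-off copy of $\partial E_K$ is exactly one of these level tori, with first coordinate $\mu_K$ and second coordinate $\lambda_K$, the union $E_K\cup(\{u_0\}\times S^1\times D^2)$ sitting inside $S^4_\mathcal{T}(0,p,q)$ is precisely the $p/q$-Dehn filling of $E_K$, i.e., an embedded copy of $S^3_{p/q}(K)$. So the correct ``slice'' of $T^2\times D^2$ is not a product over an arc in $D^2$, as you considered and rightly rejected, but a solid torus over a point of the first $S^1$ factor; once one sees this, the cross-section falls out without any Kirby calculus.
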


\begin{proof}
Consider $K$ as sitting in the standard $S^3$ equator of $S^4$. Then $\partial \nu K \subset S^3$ gives an unknotted torus $\mathcal{T}$ when we include $S^3 \hookrightarrow S^4$. We do multiplicity 0 surgery on $\mathcal{T}$ as follows. We will choose our embedding of $\mathcal{T}$ so that the first $S^1$ factor is the meridian of $K$ and the second $S^1$ factor is the longitude of $K$. The collar neighborhood of $\partial \nu K$ in $ S^3$ and the collar neighborhood of $S^3$ in $S^4$ provide a framing for $\mathcal{T}$. We will choose our surgery direction $\gamma$ to be the homology class of $p$ times the meridian and $q$ times the longitude. We claim that $S^3_{p/q}(K)$ embeds in $S^4_\mathcal{T}(0,p,q)$, which by Theorem \ref{mult0} is diffeomorphic to $S^1 \times S^3 \# S^2 \times S^2$ if $pq$ is even, and $S^1 \times S^3 \# S^2 \widetilde{\times} S^2$ if $pq$ is odd.

Our goal is to see the Dehn surgery concurrently with the torus surgery. To obtain $S^4_\mathcal{T}(0,p,q)$ we remove our chosen neighborhood of $\mathcal{T}$ and glue back $S^1 \times S^1 \times D^2$ by a gluing map of the form
\[\phi = \left( \begin{array}{ccc}
0 & m & p \\
0 & n & q \\
1 & 0 & 0 \end{array} \right)\]
(for suitable $m$ and $n$).
Hence we are gluing in a solid torus $\{pt\} \times S^1 \times D^2$ to each $S^1 \times S^1 \times \{pt\} \subset S^1 \times S^1 \times \partial D^2$, where the boundary of the meridinal disk gets mapped to $p$ times the first factor and $q$ times the second factor. Since $\partial E_K \subset \partial E_{T}$ is exactly one of these tori, the result follows.

\end{proof}

Observe that the meridian of $\mathcal{T}$ intersects $E_K \subset S^4$ in a single point. Hence this curve intersects $S^3_{p/q}(K)$ in a single point after the torus surgery, and this curve generates the fundamental group of the resulting 4--manifold. It follows that surgery on this curve (replacing $S^1 \times D^3$ with $D^2 \times S^2$) will kill the $S^1 \times S^3$ connected summand of the resulting 4--manifold. This surgery will puncture $S^3_{p/q}(K)$, and so we get the following corollary.

\begin{corollary}
If $S^3_{p/q}(K)^\circ$ denotes the 3--manifold obtained by puncturing $S^3_{p/q}(K)$, then $S^3_{p/q}(K)^\circ$ smoothly embeds in $S^2 \times S^2$ if $pq$ is even, and embeds in $ S^2 \widetilde{\times} S^2$ if $pq$ is odd.
\end{corollary}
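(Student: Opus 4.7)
The plan is to exploit the observation recorded in the paragraph immediately before the corollary: the meridian $m$ of $\mathcal{T}$ is a circle in the ambient 4--manifold $X$ (the one produced by the preceding theorem) that simultaneously generates $\pi_1(X) \cong \mathbb{Z}$ and meets $S^3_{p/q}(K)$ transversely in a single point. I would perform surgery on $m$, removing a tubular neighborhood $\nu(m) \cong S^1 \times D^3$ and filling in with $D^2 \times S^2$. This single modification accomplishes two things at once: it eliminates the $S^1 \times S^3$ summand from $X$, and it punctures $S^3_{p/q}(K)$.

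First, I would check that surgery on $m$ transforms $X = S^1 \times S^3 \# Y$ into $Y$, where $Y$ is $S^2 \times S^2$ or $S^2 \widetilde{\times} S^2$ according to the parity of $pq$. Because $m$ represents a generator of $\pi_1(X)$, it can be isotoped into the $S^1 \times S^3$ summand as the standard loop $S^1 \times \{pt\}$. The surgery then replaces one half of the decomposition $S^1 \times S^3 = (S^1 \times D^3) \cup_{S^1 \times S^2} (S^1 \times D^3)$ with $D^2 \times S^2$, yielding $\partial(D^2 \times D^3) = S^4$ in place of the $S^1 \times S^3$ summand. Hence the result of the surgery is $S^4 \# Y \cong Y$.

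Second, transversality implies that a sufficiently small tubular neighborhood $\nu(m)$ meets $S^3_{p/q}(K)$ in a single 3--ball $B$ around the unique intersection point. Therefore deleting $\text{Int}(\nu(m))$ from $X$ also deletes $\text{Int}(B)$ from $S^3_{p/q}(K)$, leaving $S^3_{p/q}(K) \setminus \text{Int}(B) = S^3_{p/q}(K)^\circ$ sitting inside $X \setminus \text{Int}(\nu(m))$. The newly attached $D^2 \times S^2$ lies entirely in the complement of $S^3_{p/q}(K)^\circ$, so the embedding is preserved and $S^3_{p/q}(K)^\circ$ embeds smoothly in $Y$.

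The main point requiring care will be verifying that the natural framing on $\nu(m)$ is the one under which the surgery yields exactly $Y$, rather than some other simply--connected 4--manifold with the same Euler characteristic and signature. This reduces to confirming that, after isotoping $m$ into the $S^1 \times S^3$ summand, the framing it inherits from $\nu(m) \subset X$ matches the product framing on $S^1 \times D^3 \subset S^1 \times S^3$; this can be read off from the explicit torus surgery picture used in the proof of Theorem \ref{mult0}, where $m$ appears as a small circle linking $\mathcal{T}$ in $S^4$ and carries the product framing inherited from its normal disk.
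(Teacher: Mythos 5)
Your proposal follows essentially the same route as the paper's (very terse) justification, which is exactly the paragraph preceding the corollary: do surgery on the meridian $m$, which both kills the $S^1 \times S^3$ summand and punctures the embedded copy of $S^3_{p/q}(K)$ by deleting a small ball around the single transverse intersection point. Your expansion of the two checks --- identifying $m$ (up to isotopy) with $S^1 \times \{pt\}$ in the $S^1 \times S^3$ summand, and noting that the $D^2 \times S^2$ that is glued back lies in the complement of $S^3_{p/q}(K)^\circ$ --- is correct and fills in what the paper leaves implicit.

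The one place where you are worrying more than necessary is the framing. Surgery on a circle in a 4--manifold carries a $\mathbb{Z}_2$ framing ambiguity, but for $S^1 \times \{pt\} \subset S^1 \times S^3$ this ambiguity is immaterial: the nontrivial re-gluing of $S^1 \times S^2$ is the Gluck-type twist $(\theta, x) \mapsto (\theta, rot_\theta(x))$, and that map extends over the complementary piece $S^1 \times D^3$ (again by $(\theta, y) \mapsto (\theta, rot_\theta(y))$). Hence both framings produce $(S^1 \times D^3) \cup (D^2 \times S^2) \cong S^4$, and the result of either circle surgery on $S^1 \times S^3 \# Y$ is $Y$. So you do not need to trace the framing of $m$ back through the torus surgery picture; the conclusion holds regardless.
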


See \cite{EL} for a similar statement when  $K$ is the unknot (and so the 3--manifolds are lens spaces). Note that any 3--manifold obtained by \emph{integral} surgery on a knot always embeds in $S^2 \times S^2$ or $ S^2 \widetilde{\times} S^2$ (just double the 4--dimensional 2-handlebody). An interesting thing about the above construction is that it does not distinguish between integral and non-integral Dehn surgery.

Next we look at embeddings of 3--manifolds into $S^4$. While many such constructions depend on handlebody techniques and branched covers of doubly slice knots (for example, see \cite{Don}, \cite{Gil-Liv}, \cite{Meier}), here we take an alternative approach using surgery.

\begin{theorem}
Let $L$ be a ribbon link in $S^3$. If $M_L$ is the 3--manifold obtained by surgery on $L$ with all the surgery coefficients belonging to the set $\{1/n\}_{n \in \mathbb{Z}}$, then $M_L$ smoothly embeds in $S^4$. If $L$ is only \emph{slice}, then we get an embedding into a \emph{homotopy} 4--sphere. However, if we restrict the surgery coefficients to the set $\{1/(2n)\}_{n \in \mathbb{Z}}$, then again we get an embedding into the standard $S^4$.
\end{theorem}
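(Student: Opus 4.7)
The plan is to extend the approach of the preceding embedding theorem: realize $M_L$ as an equatorial cross-section of a 4-manifold obtained from $S^4$ by torus surgery, and then use the ribbon structure to reduce this 4-manifold to $S^4$ (for the ribbon case) or to a homotopy $S^4$ (for the slice case).

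First I would view $L$ as sitting in the standard equator $S^3 \subset S^4 = B^4_- \cup B^4_+$ and, using that $L$ is ribbon, choose disjointly embedded ribbon disks $D_i \subset B^4_+$ with $\partial D_i = L_i$. For each component let $\mathcal{T}_i = \partial \nu L_i$ be the associated unknotted torus in $S^4$. Simultaneously performing the multiplicity-$0$ torus surgery on each $\mathcal{T}_i$ with direction $[m_i] + n_i[l_i]$ produces, by the same cross-section argument as in the preceding embedding theorem, a 4-manifold $X$ in which $M_L$ appears as an equator. A priori, $X$ has the homology of a connected sum of copies of $S^1 \times S^3$ with $S^2 \times S^2$ or $S^2 \mathbin{\widetilde{\times}} S^2$ summands (governed by the parities of the $n_i$).

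The next, and main, step is to use the ribbon disks to cancel these extra summands \emph{without puncturing} $M_L$ (the naive surgery on the meridian of $\mathcal{T}_i$ punctures $M_L$, giving only the corollary-style embedding). Each $D_i$ sits in $B^4_+$ disjointly from the cross-section $M_L$, and together with the meridional disk of the glued-in $T^2 \times D^2$ for the $i$-th surgery produces an embedded 2-sphere $\Sigma_i \subset X \setminus M_L$; for a ribbon disk each $\Sigma_i$ is a ribbon 2-sphere in the standard $S^4$. Dual to the classes of the $\Sigma_i$ in $H_2(X)$ are circles generating the $S^1 \times S^3$ summands that can also be arranged in $X \setminus M_L$. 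Performing first the circle surgeries, then the spherical surgeries, all away from $M_L$, cancels the extra summands. For ribbon disks the modifications along the $\Sigma_i$ are smoothly trivial (the Gluck twist on a ribbon 2-sphere gives back $S^4$), so the resulting smooth 4-manifold is the standard $S^4$ and $M_L$ embeds as claimed.

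For a merely slice link, the 2-spheres $\Sigma_i$ are slice but not ribbon, and the Gluck-type modifications need not be smoothly trivial; the cancellation step then only guarantees a simply connected 4-manifold with trivial intersection form, i.e.\ a homotopy $S^4$. If instead every coefficient is of the form $1/(2n_i)$, each modification along a slice $\Sigma_i$ is performed an even number of times, and by the involutivity of the Gluck twist (whose square is smoothly isotopic to the identity) these pair off, recovering the standard $S^4$ even in the slice case. The main obstacle is the cancellation step: carefully constructing the 2-spheres $\Sigma_i$ and dual circles inside $X \setminus M_L$ and verifying the outcome is diffeomorphic, not merely homeomorphic, to $S^4$. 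The ribbon-versus-slice dichotomy controls precisely the smoothness, and the parity condition in the slice case reflects the $\mathbb{Z}/2$ nature of the Gluck-twist obstruction.
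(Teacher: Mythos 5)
Your approach is genuinely different from the paper's, but it has a gap in the crucial step. The paper does \emph{not} go through multiplicity-$0$ torus surgery on $\partial\nu L_i$ at all. Instead it doubles each slice disk $D_i \subset B^4$ to obtain a $2$--knot $\mathcal{S}_{L_i}$ in $S^4$ with equator $L_i$, identifies a neighborhood $S^2 \times D^2$ of $\mathcal{S}_{L_i}$ so that $(\text{equator})\times D^2 = \nu L_i \subset S^3$, and performs $2$--knot surgery (regluing by $\rho^{n_i}$). The equatorial cross-section of this regluing is literally $1/n_i$ Dehn surgery on $L_i$, and the outcome 4--manifold is controlled by known facts: ribbon $2$--knots admit Gluck twists returning the standard $S^4$, slice $2$--knots give homotopy $4$--spheres, and $\rho^2 \simeq \text{id}$ handles the even case. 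No cancellation step is needed.

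The gap in your argument is the construction of the $2$--spheres $\Sigma_i$. You propose to form $\Sigma_i$ from the ribbon disk $D_i$ together with ``the meridional disk of the glued-in $T^2 \times D^2$.'' But $\partial D_i = L_i$, whereas the glued-in $T^2 \times D^2$ from the torus surgery on $\mathcal{T}_i = \partial\nu L_i$ attaches along $\partial\nu\mathcal{T}_i$, a 3--torus neighborhood of the \emph{unknotted torus} $\partial\nu L_i$, not of $L_i$. The meridional disk of the surgery piece is bounded by a curve of slope $m_i + n_i l_i$ on $\partial\nu L_i$. Even pushing $L_i$ out to $\partial\nu L_i$ via an annulus gives a disk bounded by the longitude $l_i$, which does not match the slope $m_i + n_i l_i$ (for any $n_i$). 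So the two disks do not glue to a sphere, and the cancellation scheme built on these $\Sigma_i$ does not get off the ground. You correctly flag the cancellation step as the main obstacle, but it is precisely the part that is missing, and the paper's route avoids it entirely by surgering on a $2$--knot rather than on a torus. If you want to salvage the torus-surgery picture, the cleaner move is to invoke Iwase's result that a Gluck twist on a $2$--knot can be realized as a torus surgery on a tubed version of that $2$--knot --- but that is a different torus than $\partial\nu L_i$, and at that point you are essentially reproving the paper's argument.
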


Budney and Burton \cite{BB} observed that if $L$ is slice and the coefficients are $\pm 1$, then $M_L$ embeds in a homotopy 4--sphere by blowing down the resulting 2--spheres in the 2-handlebody formed by attaching 2-handles to $L$ with the corresponding framings. We obtain this generalization by proceeding in a different direction; we consider cross sections of Gluck twists on the 2--knots obtained by doubling the ribbon or slice disks.

\begin{proof}

First we consider the case where $K$ is a slice knot in $S^3$. Let $\mathcal{D}_K$ be the slice disk in $B^4$, and let $\mathcal{S}_K$ be the 2--knot in $S^4$ obtained by doubling the pair $(B^4, \mathcal{D}_K)$. We will do surgery on $\mathcal{S}_K$ and see Dehn surgery on $K$ as a cross section. Identify a neighborhood of $\mathcal{S}_K$ with $S^2 \times D^2$ such that $equator \times D^2$ is identified with a tubular neighborhood of $K$ in $S^3 \subset S^4$ (and the induced framing is the zero framing). We will cut out $S^2 \times D^2$ and re-glue by the map $\rho \co S^2 \times \partial D^2 \rightarrow  S^2 \times \partial D^2$ defined by sending $(x, \theta)$ to $(rot_\theta(x), \theta)$, where $rot_\theta$ is the map that rotates $S^2$ through an angle $\theta$ about a fixed axis (we choose this to send the equator to itself). Now the result of this surgery is by definition the Gluck twist on $\mathcal{S}_K$ in $S^4$, and hence returns a homotopy 4--sphere. In fact this is true for all odd powers of $\rho$. However, if we instead re-glue by an even power $\rho^{2n}$ of $\rho$, then since $\rho^2$ is isotopic to the identity map \cite{gluck} the result of the surgery will be the \emph{standard} $S^4$. Furthermore, if $D_K$ is a \emph{ribbon} disk, then $\mathcal{S}_K$ is a ribbon 2--knot and re-gluing by any power $\rho^{n}$ will return the standard $S^4$ (see, for example, \cite{GS}).

Now we examine what happens to a neighborhood of $K$. For a point $x$ in $K$ (thinking of $K$ as the equator of $\mathcal{S}_K$), consider the effect of $\rho^n$ on the boundary of the meridinal disk $x \times D^2$. As $\theta$ varies the curve $(x, \theta)$ on the boundary will map to a curve that wraps once around the meridinal direction and $n$ times around the longitudinal direction of $K \times \partial D^2$. This is exactly Dehn surgery on $K$ with surgery coefficient $1/n$, and so we get an embedding into the 4--manifold obtained by the corresponding surgery on $\mathcal{S}_K$. We can extend this to surgery on a slice link by performing surgery on each of the 2--knots obtained by doubling the multiple slice disks. Finally, we finish by applying the comments in the previous paragraph about the result of the various 2--knot surgeries.

\end{proof}

As mentioned in the introduction, we note that the above theorem provides many embeddings of integral homology 3--spheres into $S^4$. Indeed, it is not hard to show that any 3--manifold that is obtained by surgery on a slice link with all the surgery coefficients belonging to the set $\{1/n\}_{n\in \mathbb{Z}}$ is an integral homology 3--sphere.

\end{subsection}
\end{section}

%
%
%
%
%
%
%
%
%
%
%

\bibliographystyle{amsalpha}
\bibliography{torussurgery.bib}

\end{document}